\theoremstyle{plain}
\newtheorem{thm}{Theorem}
\newtheorem{prop}{Proposition}[section]
\newtheorem{cor}[prop]{Corollary}
\newtheorem{defi}[prop]{Definition}
\newtheorem{rmk}[prop]{Remark}
\newcommand {\R} {\mathbb{R}}
\newcommand {\p} {\partial}
\newcommand {\supp} {\text{supp}}
\title{The gradient flow of the potential energy on the space of arcs}
\author{Wenhui Shi \& Dmitry Vorotnikov}%
\address[D.~Vorotnikov]{CMUC, Department of
Mathematics, University of Coimbra, 3001-501 Coimbra, Portugal}{}
\email{wshi@mat.uc.pt, mitvorot@mat.uc.pt}
\begin{document}
\begin{abstract} We study the gradient flow of the potential energy on the infinite-dimensional Riemannian manifold of spatial curves parametrized by the arc length, which models overdamped motion of a falling inextensible string. We prove existence of generalized solutions to the corresponding nonlinear evolutionary PDE and their exponential decay to the equilibrium. We also observe that the system admits solutions backwards in time, which leads to non-uniqueness of trajectories.
\end{abstract}
\maketitle

Keywords: inextensible string, gradient flow, infinite-dimensional Riemannian manifold, exponential decay

\vspace{10pt}

\textbf{MSC [2010]: 35K65, 35A01, 35A02, 58E99}

\section{Introduction}

Consider a regular arc of unit length in $\R^d$ with one end fixed at zero. By natural parametrization, it can be identified with a map $\eta:[0,1]\to \R^d$ subject to the constraints $$\eta(1)=0,\ |\p_s\eta(s)|=1 \text{ for each }s\in [0,1].$$
The space of all arcs can thus be at least formally viewed as an infinite-dimensional submanifold
\begin{align*}
\mathcal{A}=\{\eta\in H^2(0,1;\R^d): \eta(1)=0,\ |\p_s\eta(s)|=1 \text{ for all }s\in [0,1]\}
\end{align*}
of the Hilbert space $H^2(0,1;\R^d)$. The (weak) Riemannian metric on $H^2(0,1;\R^d)$ given by the $L^2$ inner product restricts to each tangent space $T_\eta\mathcal{A}$, $\eta\in \mathcal{A}$, i.e.
$$\left\langle \cdot, \cdot \right\rangle_{T_{\eta}\mathcal{A}}=\left\langle \cdot, \cdot \right\rangle_{L^2(0,1;\R^d)},$$
and thus $\mathcal{A}$ becomes a Riemannian submanifold. Here the tangent space $T_\eta\mathcal{A}$ is the vector space consisting of all vector fields $v\in H^2(0,1;\R^d)$ which satisfy the compatibility conditions \begin{align} \label{e:compcons}
v(1)=0,\ \p_s\eta(s)\cdot \p_s v(s)=0\text{ for each }s\in [0,1].
\end{align}
Note that we endow $\mathcal{A}$ with the $L^2$ inner product and not with the Hilbertian inner product on $H^2$. We refer to \cite{Preston-2012} for a rigorous treatment of the geometric properties of this manifold of arcs. Various geometries on spaces of curves are discussed in the review papers \cite{MM07,BBM14}.  The geodesics on $\mathcal{A}$ are the solutions to the PDE system \begin{equation}\label{eq:geod}
\begin{split}
\begin{cases}
\p_{tt}\eta(t,s) = \p_s(\varsigma(t,s) \p_s\eta(t,s)), \\
|\p_s\eta(t,s)|  =1
\end{cases}
\end{split}
\end{equation} subject to the boundary conditions \begin{equation} \label{e:bkg}
\eta(t,1)=0, \quad \varsigma (t,0)=0.
\end{equation} The auxiliary unknown scalar function $\varsigma$ can be viewed as the Lagrange multiplier coming from the pointwise constraint $|\p_s\eta|=1$.
These equations have been known in mechanics since very old times (and thus much before any geodesic interpretation). They are the equations of motion of inextensible strings and can be used to describe the dynamics of whips, chains, flagella etc.

In the presence of a constant gravity $g\in \R^d$, the motion equations for the inextensible string become \begin{equation}\label{eq:geodgrav}
\begin{split}
\begin{cases}
\p_{tt}\eta(t,s) = \p_s(\varsigma(t,s) \p_s\eta(t,s))+g, \\
|\p_s\eta(t,s)| =1.
\end{cases}
\end{split}
\end{equation} The celebrated catenary problem of finding the shape of a stationary chain with two fixed ends corresponds to the time-independent solution to \eqref{eq:geodgrav}. Galileo \cite{RenEl} thought that  it was a parabola, but Johann Bernoulli, Leibniz and Huygens proved that it was a hyperbolic cosine.
System \eqref{eq:geodgrav}, \eqref{e:bkg} can be deemed as a manifestation of the physical principle of least action \cite{F64} for the Lagrangian action $$\int_{t_0}^{t_1} \frac 1 2 \|\p_t \eta(t)\|^2_{L^2}-E(\eta(t))\,dt,\ \eta(t) \in \mathcal{A},$$ where \begin{align*}
E(\eta):=\int_0^1(-g)\cdot \eta
\end{align*} is the potential energy of the string. We refer to \cite[Section 2.6]{JDE17} for the details and for a discussion of links between the inextensible string \eqref{eq:geodgrav}, the geodesic interpretation of motion of an ideal incompressible fluid (associated with the names of Arnold and Brenier), and the optimal transport theory. In particular, Euler's equations for ideal incompressible fluid, together with the whip equations \eqref{eq:geod}, are examples of geodesic equations  on infinite-dimensional manifolds of volume preserving immersions endowed with the $L^2$ metric \cite{BMM16}. Apart from that,  \eqref{eq:geodgrav} can be recast \cite{JDE17} into a discontinuous system of conservation laws as well as into so-called ``total variation wave equation''.

Not much is known (see \cite{Preston-2011, JDE17} and the references therein) about existence and regularity of the exponential map on $\mathcal{A}$, which is equivalent to the question of well-posedness of the inextensible string equations \eqref{eq:geod} (or, more generally, \eqref{eq:geodgrav}) with the boundary conditions \eqref{e:bkg} and the initial conditions \begin{equation} \label{e:ikg} \eta(0,\cdot)=\eta_0\in\mathcal{A},\ \p_t\eta(0,\cdot)=v_0\in T_{\eta_0}\mathcal A.\end{equation} The applicability of standard geometric tools is impeded by low regularity of the Levi-Civita connection on $\mathcal{A}$, but PDE methods appeared to be slightly more helpful.
Smooth solutions to \eqref{eq:geod}, \eqref{e:bkg},  \eqref{e:ikg} exist locally in time \cite{Preston-2011}. Global solvability of \eqref{eq:geodgrav}, \eqref{e:bkg},  \eqref{e:ikg} has only been shown in the sense of generalized Young measures \cite{JDE17}.

Let us now derive a model for motion of a falling inextensible string (e.g., whip or flagellum) with one fixed and one free end, which is overdamped by a heavily dense environment. The motion of an inextensible string subject to a frictional force $f_d=c\eta_t$ (where $c$ is the damping coefficient) and a gravity force $f_g$ is governed by the system \begin{equation}\label{eq:geodoverd}
\begin{split}
\begin{cases}
\p_{tt}\eta(t,s) = \p_s(\varsigma(t,s) \p_s\eta(t,s))+f_g-f_d, \\
|\p_s\eta(t,s)| =1,
\end{cases}
\end{split}
\end{equation} cf. \eqref{eq:geodgrav}. Assume that the gravity is of the same order as the damping, that is, $f_g=c g$ for some constant vector $g$. Letting $\sigma=\varsigma/c$ and $c\to + \infty$ we formally deduce
\begin{equation}\label{eq:whip}
\begin{split}
\begin{cases}
\p_t\eta(t,s) &= \p_s(\sigma(t,s) \p_s\eta(t,s)) + g, \\
|\p_s\eta(t,s)| & =1.
\end{cases}
\end{split}
\end{equation}
We complement the system with the initial/boundary conditions \begin{align}\label{eq:bdry2}
\eta(t,1)=0, \quad \sigma(t,0)=0,\quad \eta(0,s)=\eta_0(s).
\end{align}
This system can be (formally) derived by considering the gradient flow on $\mathcal{A}$ driven by the potential energy $E(\eta)$
\begin{equation}
\p_t\eta = -\nabla_\mathcal{A} E(\eta),\label{e:grdflo}\end{equation} see Section \ref{s:gf}.  

Another way to feel the link between the conservative system \eqref{eq:geodgrav} and the gradient flow \eqref{eq:whip} is to employ the quadratic change of time \cite{BD18}. Indeed, introducing the ``new time'' $\theta:=t^2/2$ in \eqref{eq:geodgrav}, we derive \begin{equation}
\begin{split}
\begin{cases}
 \p_{\theta}\eta+ 2\theta \p_{\theta\theta}\eta = \p_s(\varsigma \p_s\eta)+g, \\
|\p_s\eta| =1,
\end{cases} 
\end{split} 
\end{equation} and for small  $\theta$  we approximately get  \eqref{eq:whip}. Thus, the study of \eqref{eq:whip} may shed more light on the geometry of the space of arcs since \eqref{eq:whip} might have many common features with its higher-order counterpart \eqref{eq:geodgrav}; in particular, the non-uniqueness of generalized solutions for both systems seems to be of similar nature, cf. Remark \ref{udwhip}. 

It is worth noting that the geometric ``uniformly compressing curve-shortening flow'', which we have introduced and studied in the companion paper \cite{SV19}, after a suitable change of variables becomes very similar to \eqref{eq:whip}: \begin{equation}\label{eq:mcf}
\begin{split}
\begin{cases}
\p_t\eta &= \p_s(\sigma \p_s\eta) + \eta, \\
|\p_s\eta| & =1,
\end{cases}
\end{split}
\end{equation} with the periodic boundary conditions. 

In what follows, we will always assume that the gravity is normalized: \begin{equation} \label{g=1} |g|=1.\end{equation} No generality is lost since it is possible to rescale $t$ and $\sigma$ in \eqref{eq:whip} to secure \eqref{g=1}.

In this paper we consider generalized solutions for the system \eqref{eq:whip}-\eqref{eq:bdry2} (cf. Definition \ref{defi:limit_sol}). Roughly speaking, a generalized solution pair $(\eta,\sigma)$ solves the equation $\p_t\eta=\p_s(\sigma\p_s\eta)+g$ almost everywhere and satisfies a relaxed constraint. Furthermore, any generalized solution which is $C^2$-regular solves the original system \eqref{eq:whip}-\eqref{eq:bdry2} (cf. Remark \ref{rmk:gene_sol}).  Our first result concerns the global existence of generalized solutions. More precisely, we prove
\begin{thm}
For any Lipschitz initial datum $\eta_0$ with $|\p_s\eta_0|=1$ and $\eta_0(1)=0$, there exists a solution $\eta:[0,\infty)\times [0,1]\rightarrow \R^d$, which solves \eqref{eq:whip}-\eqref{eq:bdry2} in the sense of Definition \ref{defi:limit_sol}. In addition, its Lagrange multiplier satisfies $\sigma\geq 0$ almost everywhere.
\end{thm}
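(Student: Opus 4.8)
The plan is to construct the solution as the gradient flow of a convex functional, exploiting the key observation that, although the set of arcs $\mathcal{A}$ is nonconvex because of the pointwise equality $|\p_s\eta|=1$, relaxing it to an inequality makes the admissible set convex. Set
\begin{equation*}
K=\{\eta\in H^1(0,1;\R^d):\ \eta(1)=0,\ |\p_s\eta(s)|\le 1\ \text{a.e.}\},
\end{equation*}
which is a closed convex subset of $L^2(0,1;\R^d)$. Since $E(\eta)=\int_0^1(-g)\cdot\eta$ is linear and continuous, the functional $\Phi:=E+\chi_K$ (with $\chi_K$ the convex indicator of $K$) is proper, convex and lower semicontinuous on the Hilbert space $L^2(0,1;\R^d)$, and the datum satisfies $\eta_0\in K$. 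I would build the flow by the minimizing movement (implicit Euler) scheme: fix a step $\tau>0$, put $\eta^0_\tau=\eta_0$, and define inductively
\begin{equation*}
\eta^{k+1}_\tau=\argmin_{\eta\in K}\Big\{E(\eta)+\tfrac{1}{2\tau}\|\eta-\eta^k_\tau\|_{L^2}^2\Big\}.
\end{equation*}
Each step minimizes a strictly convex, coercive functional over a closed convex set, so $\eta^{k+1}_\tau$ exists and is unique.

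Next I would read off the multiplier and the a priori estimates. The optimality condition is the variational inequality $\langle \tau^{-1}(\eta^{k+1}_\tau-\eta^k_\tau)-g,\ v-\eta^{k+1}_\tau\rangle_{L^2}\ge 0$ for all $v\in K$. Because the active constraint $|\p_s\eta|^2\le 1$ is an inequality, the Karush--Kuhn--Tucker conditions furnish a multiplier $\sigma^{k+1}_\tau\ge 0$, vanishing where $|\p_s\eta^{k+1}_\tau|<1$, with $\tau^{-1}(\eta^{k+1}_\tau-\eta^k_\tau)=\p_s(\sigma^{k+1}_\tau\p_s\eta^{k+1}_\tau)+g$ and $\sigma^{k+1}_\tau(0)=0$; thus the sign $\sigma\ge 0$ is built in from the outset. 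The scheme yields the discrete dissipation $E(\eta^{k+1}_\tau)+\tfrac{1}{2\tau}\|\eta^{k+1}_\tau-\eta^k_\tau\|_{L^2}^2\le E(\eta^k_\tau)$, and since $E$ is bounded below on $K$ (indeed $|\eta(s)|\le 1-s$ forces $|E|\le 1$), one gets $\sum_k\tau\big\|\tau^{-1}(\eta^{k+1}_\tau-\eta^k_\tau)\big\|_{L^2}^2\le C$, controlling the discrete time derivative in $L^2$. A clean bound on the multiplier comes from integrating the Euler--Lagrange equation in $s$ from the free end: using $\sigma^{k+1}_\tau(0)=0$, $\sigma^{k+1}_\tau(s)\p_s\eta^{k+1}_\tau(s)=\int_0^s\big(\tau^{-1}(\eta^{k+1}_\tau-\eta^k_\tau)-g\big)\,ds'$, and since $|\p_s\eta^{k+1}_\tau|=1$ wherever $\sigma^{k+1}_\tau>0$, this gives $0\le\sigma^{k+1}_\tau(s)\le\|\tau^{-1}(\eta^{k+1}_\tau-\eta^k_\tau)-g\|_{L^1_s}$. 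Passing to the piecewise-constant and piecewise-affine interpolants, the estimates read $|\p_s\eta_\tau|\le 1$, $\p_t\eta_\tau$ bounded in $L^2((0,T)\times(0,1))$, and $\sigma_\tau\ge 0$ bounded in $L^2_tL^\infty_s$.

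Finally I would pass to the limit $\tau\to 0$. The uniform bounds $|\p_s\eta_\tau|\le 1$ and $\p_t\eta_\tau\in L^2$ give, by an Aubin--Lions/Arzela--Ascoli argument, strong convergence $\eta_\tau\to\eta$ in $C([0,T];L^2)$ together with $\p_s\eta_\tau\rightharpoonup\p_s\eta$, $\p_t\eta_\tau\rightharpoonup\p_t\eta$, and $\sigma_\tau\overset{*}{\rightharpoonup}\sigma\ge 0$. The one genuine difficulty is the passage to the limit in the nonlinear term $\p_s(\sigma_\tau\p_s\eta_\tau)$, a product of two only weakly convergent factors. I would route around it through the tension vector $w_\tau:=\sigma_\tau\p_s\eta_\tau=\int_0^s(\p_t\eta_\tau-g)\,ds'$, which is bounded in $L^2((0,T);H^1(0,1))$ and hence compact in space. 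The crux is to upgrade this to strong convergence of $w_\tau$, equivalently of the direction $\p_s\eta_\tau$: once $w_\tau\to w$ strongly, on $\{w\ne 0\}$ one has $\sigma_\tau>0$, hence $|\p_s\eta_\tau|=1$ and $\p_s\eta_\tau=w_\tau/|w_\tau|$, so $\p_s\eta_\tau$ converges strongly there and $w=\sigma\p_s\eta$ with $\sigma=|w|$, while on $\{w=0\}$ the term vanishes by complementarity. Passing to the limit then yields $\p_t\eta=\p_s(\sigma\p_s\eta)+g$ a.e.\ with $\sigma\ge 0$ and the relaxed constraint $|\p_s\eta|\le 1$, i.e.\ a generalized solution in the sense of Definition \ref{defi:limit_sol}. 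I expect this identification of the weak limit of $\sigma_\tau\p_s\eta_\tau$ — equivalently, the characterization of the normal cone to $K$ by a single nonnegative scalar multiplier — to be the main obstacle; it is also the point at which the abstract Brezis--Komura theory of gradient flows of convex functionals on $L^2$ could be invoked to shortcut the existence part, leaving only this static Lagrange-multiplier characterization to be verified.
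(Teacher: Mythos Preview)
Your approach is genuinely different from the paper's. The paper constructs the solution via a family of quasilinear parabolic regularizations on the ambient Hilbert space: it replaces the constraint by a smooth penalty-type nonlinearity $G^\epsilon$, solves the resulting uniformly parabolic system, derives uniform energy and $L^\infty$ estimates, and passes to the limit $\epsilon\to 0$. Your proposal instead exploits the convexity of the relaxed constraint set $K$ and runs a minimizing-movement scheme (or invokes Br\'ezis--Komura). Both routes naturally deliver $\sigma\ge 0$: the paper's because $\sigma^\epsilon = G^\epsilon(\p_s\eta^\epsilon)\cdot\p_s\eta^\epsilon\ge 0$ by construction of $F^\epsilon$, yours via the KKT sign condition. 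Your route is conceptually cleaner and would also give uniqueness of the flow for free.

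However, your proposal has a real gap precisely where you flag it. The ``static Lagrange-multiplier characterization'' of $N_K(\eta)$ is not a formality: you must prove that every element of $N_K(\eta)\cap L^2$ has the form $-\p_s(\sigma\p_s\eta)$ with a \emph{function} $\sigma\ge 0$ satisfying $\sigma(0)=0$ and $\sigma(|\p_s\eta|^2-1)=0$, and moreover that $\sigma$ and $\sigma\p_s\eta$ lie in $H^1(0,1)$ as required by Definition~\ref{defi:limit_sol}(i). A priori the multiplier for a pointwise inequality constraint is only a nonnegative measure; promoting it to an $H^1$ function needs an argument you do not supply. Without this step there is no $\sigma^{k+1}_\tau$ to speak of, and the subsequent estimates and limit passage cannot begin. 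The paper's regularization sidesteps this entirely: the $H^1$ regularity of $\sigma^\epsilon$ is inherited from the smooth parabolic approximation and shown to be uniform in $\epsilon$.

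A secondary gap is the asserted strong convergence of $w_\tau$. Boundedness in $L^2_tH^1_s$ alone does not yield strong $L^2_{t,s}$ convergence; Aubin--Lions would require time regularity of $w_\tau$, which you do not have. The paper handles the analogous product $\sigma^\epsilon\p_s\eta^\epsilon$ differently: it integrates by parts in $s$ so that only the strongly convergent factor $\eta^\epsilon$ multiplies weakly convergent ones, and separately proves $\int|\kappa^\epsilon|\,\big||\p_s\eta^\epsilon|^2-1\big|\to 0$. The same integration-by-parts device would repair your limit passage, since at the discrete level $\sigma_\tau(|\p_s\eta_\tau|^2-1)=0$ holds exactly; your strong-convergence route, as written, does not. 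You also do not verify the energy dissipation inequality (iii); in the Br\'ezis--Komura setting it would follow from the abstract chain rule, but for the discrete scheme it requires a separate check.
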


The generalized solutions may not be unique.
For example, it is easy to check that $\eta_{-\infty}(s)=(s-1)g$ is a smooth stationary solution to \eqref{eq:whip} with $\sigma_{-\infty}(s)=-s\leq 0$, hence it is a generalized solution. However applying the above theorem with the initial datum $\eta_0=\eta_{-\infty}$ we obtain another generalized solution with $\sigma\geq 0$. The branching of trajectories is actually ubiquitous, and we refer to Section \ref{s:backward} for a more involved discussion on the non-uniqueness issue.

In the second part of the paper we study the long time asymptotics of a generalized solution. We show that any generalized solution with $\sigma \geq 0$ converges to the downwards vertical stationary solution with a universal exponential convergence rate.
\begin{thm}
Let $\eta$ be a generalized solution, whose Lagrange multiplier $\sigma$ is nonnegative almost everywhere. Let $\eta_\infty=(1-s)g$ be the downwards vertical stationary solution. Then there exists a universal constant $c_0>0$ such that
\begin{align*}
\left\|\eta(t,\cdot)-\eta_\infty(\cdot)\right\|^2_{L^2(0,1;\R^d)}\leq (2c_0)^{-1}\left(E(\eta(0,\cdot))-E(\eta_\infty)\right)e^{-c_0t}\text{ for any } t\in [0,\infty).
\end{align*}
\end{thm}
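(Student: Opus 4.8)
The plan is to run a constrained gradient-flow/Łojasiewicz argument, resting on two analytic pillars: an exact reformulation of the energy gap as a degenerate weighted Dirichlet integral, and a Hardy-type inequality controlling the $L^2$ distance by that integral. Throughout write $u:=\eta-\eta_\infty$, and recall $\eta_\infty=(1-s)g$, so that $\partial_s\eta_\infty=-g$, $u(t,1)=0$, and $\partial_s u=\partial_s\eta+g$. Since $E$ is linear, $E(\eta)-E(\eta_\infty)=\int_0^1(-g)\cdot u\,ds$; writing $\eta(s)=-\int_s^1\partial_s\eta$ and applying Fubini gives $E(\eta)=\int_0^1 s\,(g\cdot\partial_s\eta)\,ds$, whence, using $\partial_s\eta_\infty=-g$ and the elementary identity $1+g\cdot\partial_s\eta=\tfrac12|\partial_s\eta+g|^2$ (valid when $|\partial_s\eta|=1$), one obtains
\begin{equation*}
E(\eta)-E(\eta_\infty)=\frac12\int_0^1 s\,|\partial_s u(s)|^2\,ds .
\end{equation*}
The weight degenerates at $s=0$, but since $u(1)=0$ the Hardy-type inequality $\int_0^1|u|^2\,ds\le C\int_0^1 s\,|\partial_s u|^2\,ds$ (write $u(s)=-\int_s^1\partial_s u$ and split the weight as $s^{-1/2}\cdot s^{1/2}$ under Cauchy--Schwarz; the resulting logarithmic factor is integrable on $(0,1)$) yields the coercivity estimate
\begin{equation*}
E(\eta(t))-E(\eta_\infty)\ \ge\ c_0\,\|u(t)\|_{L^2}^2
\end{equation*}
for a universal $c_0>0$ (the sharp constant being governed by the first zero of $J_0$).

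The dynamic estimate comes from testing $\partial_t\eta=\partial_s(\sigma\partial_s\eta)+g$ against $u$ in $L^2(0,1;\R^d)$. Integrating by parts in $s$, the boundary terms vanish because $u(t,1)=0$ and $\sigma(t,0)=0$, so
\begin{equation*}
\frac12\frac{d}{dt}\|u\|_{L^2}^2=\langle\partial_t\eta,u\rangle_{L^2}=-\int_0^1\sigma\,\partial_s\eta\cdot\partial_s u\,ds+\int_0^1 g\cdot u\,ds .
\end{equation*}
Using again $\partial_s\eta\cdot\partial_s u=1+g\cdot\partial_s\eta=\tfrac12|\partial_s u|^2$ and $\int_0^1 g\cdot u=-(E(\eta)-E(\eta_\infty))$, this becomes
\begin{equation*}
\frac12\frac{d}{dt}\|u\|_{L^2}^2=-\frac12\int_0^1\sigma\,|\partial_s u|^2\,ds-\big(E(\eta)-E(\eta_\infty)\big).
\end{equation*}
Here is the one place where $\sigma\ge0$ is indispensable: it makes the first term nonpositive, so it can be discarded to give $\tfrac12\frac{d}{dt}\|u\|_{L^2}^2\le-(E(\eta)-E(\eta_\infty))$. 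Without this sign the estimate fails, consistently with the upward stationary solution $\eta_{-\infty}$ (which has $\sigma_{-\infty}\le0$) not decaying to $\eta_\infty$.

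Combining the two pillars closes the loop: by coercivity, $\frac{d}{dt}\|u\|_{L^2}^2\le-2\big(E(\eta)-E(\eta_\infty)\big)\le-2c_0\|u\|_{L^2}^2$, so Grönwall yields exponential decay of $\|u(t)\|_{L^2}^2$, and inserting the coercivity bound at $t=0$ (namely $\|u(0)\|_{L^2}^2\le c_0^{-1}(E(\eta(0))-E(\eta_\infty))$) expresses the prefactor through the initial energy gap; tracking the universal constants produces a bound of exactly the stated form. Equivalently, one may decay the energy gap itself: the dissipation identity $\frac{d}{dt}(E(\eta)-E(\eta_\infty))=-\|\partial_t\eta\|_{L^2}^2$ combines with coercivity into a Polyak--Łojasiewicz inequality $E(\eta)-E(\eta_\infty)\le C\|\partial_t\eta\|_{L^2}^2$, again giving exponential decay.

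The main obstacle is that all of the above is formal and must be justified within Definition \ref{defi:limit_sol}: a generalized solution need not be smooth, $|\partial_s\eta|=1$ holds only in the relaxed sense, and $t\mapsto E(\eta(t))$ must first be shown to be absolutely continuous with the claimed derivative. I would argue at the level of the regularized solutions built in the proof of the existence theorem, establishing the two displayed differential inequalities with constants independent of the regularization, and then pass to the limit, retaining only the inequalities (which survive lower semicontinuity) rather than the identities. Particular care is needed where $\partial_s\eta\cdot\partial_s u=\tfrac12|\partial_s u|^2$ was used: under the relaxed constraint this produces an extra term proportional to $\int_0^1\sigma\,(1-|\partial_s\eta|^2)\,ds$, which must be shown to have the right sign or to vanish by the complementarity structure encoded in Definition \ref{defi:limit_sol} together with $\sigma\ge0$.
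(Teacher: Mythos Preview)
Your main argument is correct and takes a genuinely different route from the paper. The paper first proves exponential decay of the \emph{relative energy} $\tilde E(t)=E(\eta(t))-E(\eta_\infty)$ by establishing the Polyak--\L ojasiewicz inequality $\tilde E(t)\le \bar c_0\|\partial_t\eta\|_{L^2}^2$ and combining it with the dissipation inequality (iii) of Definition~\ref{defi:limit_sol}; only afterwards does it deduce the $L^2$ decay via Hardy. Their P\L\ inequality is proved by applying Hardy to $\partial_t\eta=\partial_s\kappa-\partial_s\kappa_\infty$ (with $\kappa=\sigma\partial_s\eta$, $\kappa_\infty=-sg$) to control $\int s^{-1}|\kappa-\kappa_\infty|^2$, and then bounding $\tilde E$ above by this weighted integral through a careful case analysis on the sets $\{|\partial_s\eta|=1\}$ and $\{\sigma=0\}$. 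By contrast, you bypass the P\L\ inequality entirely: you test the PDE against $u=\eta-\eta_\infty$, use $\sigma\ge 0$ to drop the sign-definite term $-\tfrac12\int\sigma|\partial_s u|^2$, and close via the coercivity $\tilde E\ge c_0\|u\|_{L^2}^2$ (Hardy in the other direction). Your argument is more elementary and, notably, does not use condition (iii) at all. The price is that you obtain decay of $\|u\|_{L^2}^2$ directly but not of $\tilde E$ itself (the paper's Theorem~\ref{thm:exp}), since the extra term $\tfrac12\int s(1-|\partial_s\eta|^2)$ in $\tilde E$ is not controlled by $\|u\|_{L^2}^2$.

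Two small comments. First, the ``equivalently'' paragraph at the end is misleading: your coercivity bound $\tilde E\ge c_0\|u\|^2$ does \emph{not} by itself give the P\L\ inequality $\tilde E\le C\|\partial_t\eta\|^2$; the paper proves the latter by an independent argument. Second, going back to the approximating flows is unnecessary: since $\eta\in H^1_{loc}([0,\infty);L^2)$ the map $t\mapsto\|u(t)\|_{L^2}^2$ is absolutely continuous, the integration by parts is legitimate because $\sigma\partial_s\eta\in L^2(H^1)$ with $(\sigma\partial_s\eta)(t,0)=0$ and $u(t,1)=0$, and the identity $\sigma\,\partial_s\eta\cdot\partial_s u=\tfrac12\sigma|\partial_s u|^2$ holds a.e.\ thanks to the complementarity relation \eqref{eq:defii}; so your differential inequality is valid directly at the level of generalized solutions.
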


Let us briefly comment on the difficulties and main ideas used in the proofs. Firstly, the evolutionary system \eqref{eq:whip} is nonlinear due to the constraint $|\p_s\eta|=1$, and it is parabolic merely provided $\sigma> 0$.  The Lagrange multiplier $\sigma(t,\cdot)$ is formally determined by a nonlinear ODE involving $|\p_{ss}\eta(t,\cdot)|^2$ with Dirichlet-Neumann boundary condition (cf. Section \ref{s:gf}). Secondly, by exploring the geodesic convexity of the energy functional it is possible to show that the condition $\sigma\geq 0$ is preserved along the trajectories of our gradient flow, making the problem formally parabolic. However, the parabolicity is degenerate at the boundary (recall the boundary condition $\sigma(t,0)=0$) and possibly also in the interior. Thirdly, there are some intrinsic obstructions to regularity of solutions (cf. Section \ref{s:gf}).

Several related gradient flows of inextensible strings were studied in \cite{Koi96,Ok07,O08,OS10,Oel11,Oel14}. In those papers, additional fourth-order terms coming from the bending energy appear, which help to secure non-degenerate parabolicity of the equations and decrease the difficulties created by the Lagrange multiplier $\sigma$.

The induced Riemannian distance on $\mathcal{A}$ is not degenerate \cite{Preston-2012}. However, we do not expect  $\mathcal{A}$ to be a complete metric space since the Riemannian metric is weak, cf. \cite{BBM14}, and such issues are non-trivial even for strong metrics \cite{BV16}. Moreover, arbitrarily small pieces of geodesics might be non-minimizing \cite{Preston-2012}, so it is plausible that $\mathcal{A}$ is not a geodesic space (in the sense that any two points can be joined by a constant speed metric geodesic).  These peculiarities make the general theory of metric gradient flows \cite{AGS08} hardly applicable to wellposedness and long-time behaviour of our problem.

Our idea is to construct a suitable family of $L^2$-gradient flows on the ambient flat Hilbert space, which relax the constraint and at the same time approximate the original gradient flow on the manifold $\mathcal{A}$. Our approximation is totally different from the penalty method used in \cite{Oel14} for a related fourth-order flow. We show that the solutions to the approximation problems satisfy uniform energy and pointwise estimates (cf. Section \ref{s:approximation}). Furthermore, we are able to recover the constraint $|\p_s\eta|=1$ in a weak sense in the limit, and obtain a generalized solution to the original system. We point out that the Bakry-Emery strategy \cite{villani03topics,AGS08} is not applicable (cf. Section \ref{s:gf}) for the proof of exponential decay of the relative energy $E(\eta(t))-E(\eta_\infty)$, but we manage to prove the inequality between the relative energy and its dissipation in a different way.

The remainder of the paper is organized as follows: In Section \ref{s:gf} we explore the gradient flow structure of our problem, which provides a heuristic intuition for the rest of the argument. Having the gradient flow structure in mind, we consider in Section \ref{s:approximation} a family of approximation problems, which are quasilinear and parabolic. We show that solutions to the approximation problems satisfy uniform energy estimates (cf. Proposition \ref{prop:energy}) and $L^\infty$-estimates (cf. Proposition \ref{prop:max_uniform}). Moreover, we derive compactness results and in addition show that in the limit the constraint is satisfied in a weak sense (cf. Proposition \ref{prop:uniform}). In Section \ref{sec:limiting_prob} we define the generalized solution to the original problem (cf. Definition \ref{defi:limit_sol}) and prove that those limit solutions coming from the approximation problems are generalized solutions, which gives the main existence result (cf. Theorem \ref{thm:limit}). In Section \ref{s:exp} we show that any generalized solution converges exponentially fast to the downwards vertical stationary solution (cf. Theorem \ref{thm:exp}). Finally, in Section \ref{s:backward} we discuss backward solutions and the non-uniqueness issue.

\section{The gradient flow structure} \label{s:gf} In this section we make several formal heuristic observations related to the gradient flow structure of our problem.

\subsubsection*{The gradient of the potential energy}  It is clear that $-\nabla_{L^2} E(\eta)=g.$ Hence, one should have \begin{equation}\label{gradort}-\nabla_{\mathcal{A}} E(\eta)=P_\eta g,\end{equation} where $P_\eta g$ is the orthogonal projection of $g$ onto the tangent space $T_\eta \mathcal{A}$. However, $T_\eta \mathcal{A}$ is not closed in $L^2$ because of the condition at the fixed end in \eqref{e:compcons}, so the projection $P_\eta: H^2(0,1;\R^d)\rightarrow T_\eta\mathcal{A}$ fails to exist. This is related to non-existence of smooth solutions, and we will return to these issues by the end of Section \ref{s:gf}. Nevertheless, we still claim that if the ODE \eqref{eq:sigma}-\eqref{eq:sigma2}
\begin{gather} \label{eq:sigma}
\p_{ss}\sigma-|\p_{ss}\eta|^2 \sigma=0 \text{ for  } s\in (0,1),\\ \label{eq:sigma2} \sigma(0)=0,\ \p_{ss}\eta(1)\sigma(1)+ \p_s\eta(1)\p_s \sigma(1)=-g,
\end{gather}
is solvable for $\sigma$, then
 \begin{equation}\label{gradort1}
 P_\eta g:=g+\p_s(\sigma \p_s\eta)
 \end{equation}
(cf. a similar expression in \cite[Proposition 3.2]{Preston-2012}) fulfills the expected conditions for the image of $g$ under the orthogonal projection, namely, $P_\eta g\in T_\eta \mathcal{A}$ and $g-P_\eta g$ is $L^2$-orthogonal to any $v\in T_\eta \mathcal{A}$. Indeed, differentiating the constraint $|\p_s\eta|^2=1$ we find that $$\p_s\eta\cdot \p_{ss}\eta=0, \ \p_s\eta\cdot \p_{sss}\eta=-|\p_{ss}\eta|^2.$$ Hence, $$\p_s (P_\eta g)\cdot \p_s \eta=\p_{ss}\sigma-|\p_{ss}\eta|^2 \sigma=0.$$ Moreover, $(P_\eta g)(1)=0$ by \eqref{eq:sigma2}, so we have proved that $P_\eta g\in T_\eta \mathcal{A}$. Finally, for each $v\in T_\eta \mathcal{A}$, integration by parts implies $$\int_0^1(g-P_\eta g)\cdot v\, ds=\int_0^1\sigma \p_s\eta\cdot\p_s v\, ds=0.$$

\subsubsection*{Our problem as a gradient flow} Due to \eqref{gradort}, \eqref{gradort1}, any trajectory $\eta(t)$ of the gradient flow \eqref{e:grdflo} emanating from $\eta_0$ satisfies \eqref{eq:whip}, \eqref{eq:bdry2}. Conversely, any solution to \eqref{eq:whip}, \eqref{eq:bdry2} is a trajectory of the gradient flow. To see this, we just need  to show that \eqref{eq:whip}, \eqref{eq:bdry2} formally yield \eqref{eq:sigma}, \eqref{eq:sigma2}. As above, $|\p_s\eta|^2=1$ implies $\p_s\eta\cdot \p_{ss}\eta=0,\ \p_s\eta\cdot \p_{ts}\eta=0.$
Taking the scalar product of the first equation of \eqref{eq:whip} with $\p_s\eta$, we get \begin{equation}\p_t\eta \cdot \p_s
\eta=
 \p_s(\sigma\p_s\eta)\cdot \p_s\eta + g \cdot \p_s\eta= \p_s \sigma+ g \cdot \p_s\eta.\label{eq:7}\end{equation}
Differentiation in $s$ implies \begin{equation*}(\p_t\eta-g) \cdot \p_{ss}
\eta= \p_{ss} \sigma.\end{equation*} Remembering that $\p_t \eta-g=\p_s(\sigma \p_s\eta)$, we deduce  \eqref{eq:sigma}. Since $g+\p_s(\sigma \p_s\eta)(1)=\p_t \eta(1)=0$ by \eqref{eq:bdry2}, we retrieve \eqref{eq:sigma2}.

\subsubsection*{Estimating the Lagrange multiplier} By \eqref{eq:bdry2} and \eqref{eq:7}, $\sigma$ satisfies the Dirichlet-Neumann condition at the two ends
\begin{equation} \label{e:bksig}
\sigma(t,0)=0, \quad \p_s\sigma(t,1)=\cos \alpha ,
 \end{equation}
where $\alpha=\alpha(\eta(t))$ is the angle between the vertical direction $g$ and the tangent line to the arc at the fixed end: $$\cos \alpha=-g\cdot \p_s\eta(t,1).$$ By the maximum principle, for each fixed $t$, we find from \eqref{eq:sigma} that the function $\sigma(t,s)$ is either convex, increasing and non-negative (provided $\alpha$ is acute) or concave, decreasing and non-positive  (provided $\alpha$ is obtuse). Moreover, $\sigma(t,s)=0$ for all $s$ provided $\alpha$ is right. Consequently, $$|\partial_s \sigma(t,s)|\leq |\p_s\sigma(t,1)|=|\cos \alpha|\leq 1,$$ and thus $$|\sigma(t,s)|\leq |s\cos \alpha|\leq s.$$

\subsubsection*{Energy dissipation and the equilibria} We now compute the dissipation of the potential energy $E(t):=E(\eta(t,\cdot))$ along the trajectories of our gradient flow, employing integration by parts:
\begin{multline}\label{e:dissd} D(t):=-E'(t)=\int_0^1g \cdot \p_t\eta=\int_0^1g \cdot(g+\p_s(\sigma \p_s\eta))\\=1+\sigma(t,1)g\cdot \p_s\eta(t,1)=1-\sigma(t,1) \cos \alpha(\eta(t)).\end{multline}
The energy dissipation vanishes merely at the two equilibria of our problem, namely, at the downwards vertical state $(\eta_\infty,\sigma_\infty)=((1-s)g,s)$ (where $\alpha=0$ and the potential energy attains its minimum) and at the upward whip $(\eta_{-\infty},\sigma_{-\infty})=((s-1)g,-s)$ (where $\alpha=\pi$ and the potential energy attains its maximum). The fact that the potential energy achieves its extreme values at the corresponding equilibria will be rigorously  justified in Remark \ref{rmk:nonnegative}, although it is intuitively clear. Consequently, it makes sense to define the relative potential energy by $$\tilde{E}(t):=E(t)-E(\eta_\infty)\geq 0.$$

\subsubsection*{Geodesic convexity} We formally calculate the Hessian of the potential energy on $\mathcal{A}$ by the well-known formula $$\left\langle Hess_{\mathcal{A}}\, E (\eta_0) \cdot v_0,v_0\right\rangle_{T_{\eta_0}\mathcal{A}}=\frac {d^2}{dt^2}\Big|_{t=0} E(\eta(t)),$$
where $\eta(t)$ is the geodesic curve on $\mathcal{A}$ emanating from $\eta_0$ with the initial velocity field $v_0$, that is, a solution to \eqref{eq:geod}, \eqref{e:bkg},  \eqref{e:ikg}. We thus compute \begin{multline}\label{e:hess} \left\langle Hess_{\mathcal{A}}\, E (\eta_0) \cdot v_0,v_0\right\rangle_{T_{\eta_0}\mathcal{A}}=-\int_0^1g \cdot \p_{tt}\eta(0,s)\,ds=-\int_0^1g \cdot \p_s(\varsigma(0,s)\p_{s}\eta(0,s))\\=-\varsigma(0,1)g\cdot \p_{s}\eta_0(1)=\varsigma(0,1)\cos \alpha(\eta_0).\end{multline} But $\varsigma(s)=\varsigma(0,s)$ formally satisfies \begin{equation} \label{e:siggeod} \p_{ss}\varsigma-|\p_{ss}\eta_0|^2 \varsigma+|\p_{s}v_0|^2 =0,\
\varsigma(0)=0, \quad \p_s\varsigma(1)=0,
 \end{equation} which in particular yields $\varsigma\geq 0$, see \cite{Preston-2011,JDE17}. Thus the potential energy $E:\mathcal{A}\to \R$ is geodesically convex at a point $\eta_0$ if $\alpha(\eta_0)\leq \frac \pi 2$ (and concave if $\alpha(\eta_0)\geq \frac  \pi 2$). We however claim that it is possible to construct some sequences $\eta^\epsilon_0\in \mathcal{A}$,  $v_0^\epsilon\in {T_{\eta^\epsilon_0}\mathcal{A}}$, with $\alpha(\eta^\epsilon_0)=\alpha_0$ being a constant angle strictly between $0$ and $\pi$, and with $\|v_0^\epsilon\|_{L^2}$ bounded away from $0$, so that for the corresponding solutions to \eqref{e:siggeod}  one has $\varsigma^\epsilon(1)\to 0$.  Indeed, assume for definiteness that $d=3$ and $g=(0,0,-1)$. Then we can consider $$\eta^\epsilon_0(s)=(\epsilon\sin \alpha_0(\cos \frac s \epsilon -\cos \frac 1 \epsilon), \epsilon\sin \alpha_0(\sin \frac s \epsilon-\sin \frac 1 \epsilon ),(s-1)\cos \alpha_0)$$ and $$v^\epsilon_0(s)=(\epsilon\cos \alpha_0(\cos \frac s \epsilon -\cos \frac 1 \epsilon), \epsilon\cos \alpha_0(\sin \frac s \epsilon-\sin \frac 1 \epsilon ),(1-s)\sin \alpha_0).$$ Then \eqref{e:siggeod} becomes \begin{equation*}  \p_{ss}\varsigma^\epsilon- \frac {\sin^2 \alpha_ 0} {\epsilon^2}\varsigma^\epsilon+1 =0,\
 \varsigma^\epsilon(0)=0, \quad \p_s\varsigma^\epsilon(1)=0,
  \end{equation*} and an explicit computation shows that $\varsigma^\epsilon(1)\leq \frac {\epsilon^2}{\sin^2 \alpha_0} \to 0$. This counterexample prevents the inequality $$\left\langle Hess_{\mathcal{A}}\, E (\eta_0) \cdot v_0,v_0\right\rangle_{T_{\eta_0}\mathcal{A}}\geq \lambda \left\langle v_0,v_0\right\rangle_{T_{\eta_0}\mathcal{A}}, \ v_0\in T_{\eta_0}\mathcal{A}$$ to hold with a uniform $\lambda >0$ even if we restrict ourselves to small angles $\alpha(\eta_0)>0$. Moreover, $E$ is not uniformly $\lambda$-convex for any positive $\lambda$ even in a neighbourhood of $\eta_\infty$ defined by the inequality $\tilde E(\eta)<\varepsilon$ for any $\varepsilon$.

\subsubsection*{Exponential decay} A natural way to prove the exponential decay of the relative energy for a gradient flow \cite{villani03topics,villani08oldnew} is to establish a functional inequality between the relative energy $\tilde E$ and its dissipation $D$.  We are not able to apply the Bakry-Emery strategy \cite{villani03topics} based on the identity \begin{equation} \label{e:bemery}D'(t)=-2\left\langle Hess_{\mathcal{A}}\, E (\eta(t)) \cdot \nabla_\mathcal{A} E(\eta(t)),\nabla_\mathcal{A} E(\eta(t))\right\rangle_{T_{\eta(t)}\mathcal{A}}\end{equation} due to the lack of strict geodesic convexity. However, in Section \ref{s:exp} we will still manage to uniformly control the relative energy by its dissipation. The proof will heavily rely on Hardy type inequalities.

\subsubsection*{Non-negativity of $\sigma$ is conserved along the flow}  Setting $\tilde D(t) =D(t)$ for $\alpha(\eta(t))\leq \frac \pi 2$ and $\tilde D(t) =2-D(t)$ for $\alpha(\eta(t))\geq \frac \pi 2$, we derive from \eqref{e:dissd}, \eqref{e:hess} and \eqref{e:bemery} that $\tilde D$ decays along the trajectories of our gradient flow. If $\sigma(0,s)\geq 0$ for all $s$, or, equivalently, if $\alpha(\eta_0)\leq \frac \pi 2$, then $\tilde D(t)\leq 1$ for all $t>0$, hence $\alpha(\eta(t))$ remains acute (or at least right) along the whole trajectory, whence $\sigma(t,s)\geq 0$ for all $t$ and $s$. A similar property (conservation of non-negativity of $\varsigma$ along trajectories) was conjectured in \cite{JDE17} for the smooth solutions to the inextensible string equations \eqref{eq:geodgrav}, \eqref{e:bkg},  \eqref{e:ikg} but we do not know any proof yet. In Section \ref{sec:limiting_prob} we will construct generalized solutions with non-negative $\sigma$ for any initial datum. There is no paradox here since any initial datum $\eta_0$ with obtuse $\alpha$ can be uniformly approximated in $[0,1]$ by data with acute $\alpha$.

\subsubsection*{Mismatch at the fixed end}  We finish this section by observing that smooth solutions to our gradient flow can fail to exist. Indeed, let $\eta_0$ be any smooth function such that \begin{equation}|{\cos\alpha(\eta_0)}||\p_{ss}\eta_0(1)|< {1-|\cos\alpha(\eta_0)|}.\label{e:nonr}\end{equation} In particular, \eqref{e:nonr} holds for the whips with $|\p_{ss}\eta_0(1)|=0$ and $\alpha(\eta_0)\neq 0$ (different from the steady states), or for $\alpha=\frac \pi 2$. Then the corresponding regular solution would satisfy \begin{equation} \label{e:reg2}
0=\p_t\eta(0,1) = \p_s\sigma(0,1) \p_s\eta_0(1)+\sigma(0,1)\p_{ss}\eta_0(1) + g
\end{equation} But \begin{equation*}
|\p_s\sigma(0,1) \p_s\eta_0(1)+\sigma(0,1)\p_{ss}\eta_0(1)|\leq |\cos \alpha|(1+|\p_{ss}\eta_0(1)|)<1,
\end{equation*} which contradicts \eqref{e:reg2} since $|g|=1$.
Another way to see this discrepancy is to notice that the ODE problem \eqref{eq:sigma}, \eqref{eq:sigma2} is overdetermined. Hence, the orthogonal projection $P_{\eta_0} g$ may not exist, as anticipated at the beginning of Section \ref{s:gf}, although $P_{\eta(t)} g=\p_t \eta(t)$ must exist for the smooth trajectories $\eta(t)$ of the gradient flow, $t>0$.
All this might look like as a tame compatibility issue at the point $(t,s)=(0,1)$. However, any compatibility assumption would rule out an unacceptably large set of physically reasonable initial data, in particular, the open set in $C^2$ determined by \eqref{e:nonr}.

\section{Approximation}
\label{s:approximation}
In this section we approximate problem \eqref{eq:whip} by $L^2$-gradient flows in the ambient space. We derive uniform energy estimates for the approximation problem and show that the limiting functions are generalized solutions to the overdamped whip equations \eqref{eq:whip}.

First we rewrite the equation \eqref{eq:whip} as a first order system: let $\kappa:=\sigma \p_s\eta$, then $(\eta, \kappa, \sigma)\in \R^d\times \R^d\times \R$ solves
\begin{equation}\label{eq:whip2}
\begin{split}
\begin{cases}
\p_t\eta=\p_s\kappa+g\\
\kappa=\sigma\p_s\eta\\
\sigma=\kappa\cdot\p_s\eta,
\end{cases}
\end{split}
\end{equation}
and we keep the initial/boundary conditions \eqref{eq:bdry2}.
Noticing that $|\kappa|=|\sigma|$ by the constraint $|\p_s \eta|=1$, one can rewrite the second equation as $\kappa=sgn(\sigma)|\kappa|\p_s\eta$. If we assume furthermore that $\sigma\geq 0$ and $\kappa\neq 0$, then $\p_s\eta=\frac{\kappa}{|\kappa|}$.
This motivates us to consider the following
approximation problem: for 
\begin{equation}\label{eq:epsilon}
\epsilon\in (0,1/16)
\end{equation}
let
\begin{equation}\label{eq:F_eps}
F^\epsilon:\R^d\rightarrow \R^d,\quad F^\epsilon(\kappa):=\epsilon\kappa+\frac{\kappa}{\sqrt{\epsilon+|\kappa|^2}},
\end{equation}
and let
$$G^\epsilon(\tau):=(F^\epsilon)^{-1}(\tau).$$
The smallness condition \eqref{eq:epsilon} on $\epsilon$ is used in the later uniform estimates, for instance \eqref{e:rhside}. An explicit computation yields $\nabla G^\epsilon$ is positive definite and
$$\lambda_\epsilon (\tau)|\xi|^2\leq \nabla G^\epsilon(\tau)\xi\cdot \xi\leq \Lambda_\epsilon(\tau)|\xi|^2, \quad \forall \xi\in \R^d, \tau\in \R^d,$$
where $\lambda_\epsilon(\tau)$ and $\Lambda_\epsilon(\tau)$ satisfy
\begin{equation}\label{eq:G_eps}
\begin{split}
\lambda_\epsilon(\tau)=\frac{1}{\epsilon+(\epsilon+|G^\epsilon(\tau)|^2)^{-1/2}},\\
\Lambda_\epsilon(\tau)=\frac{\epsilon^{-1}}{1+(\epsilon+|G^\epsilon(\tau)|^2)^{-3/2}}.
\end{split}
\end{equation}
Then we consider the approximation problem:
\begin{equation}\label{eq:approx_whip}
\p_t\eta^\epsilon= \p_s(G^\epsilon(\p_s\eta^\epsilon))+g
\text{ in } (0,\infty)\times (0,1),
\end{equation}
with the initial/boundary conditions
\begin{equation}\label{eq:bdry}
\begin{split}
\eta^\epsilon(t,1)&=0, \quad \p_s\eta^\epsilon(t,0)=0,\\
\eta^\epsilon(0,s)&=\eta_0^\epsilon(s).
\end{split}
\end{equation}
Here $\eta^\epsilon_0$ are smooth functions in $(0,1)$, which are chosen to approximate the given initial datum $\eta_0$ in $C([0,1])$, see Remark \ref{rmk:initial} for the details. Here and in what follows we often omit the dimension $d$ for brevity. Since $\nabla G^\epsilon$ is smooth in its argument and positive definite, the above semi-linear system is well-posed: given any smooth initial datum $\eta^\epsilon_0$ satisfying \eqref{eq:bdry}, the existence of a unique smooth solution $\eta^\epsilon:C^{\infty}((0,T] \times [0,1]; \mathbb{R}^{d}) \cap C([0,T] \times [0,1]; \mathbb{R}^{d})$ to the above system follows from Amann's theory \cite{Amann} (cf. \cite[Proof of Theorem 4.2]{JDE17}). In comparison with the original equation \eqref{eq:whip2} we define
\begin{equation}\label{eq:kappa_sigma}
\begin{split}
\kappa^\epsilon := G^\epsilon(\p_s\eta^\epsilon),\quad
\sigma^\epsilon := G^\epsilon(\p_s\eta^\epsilon)\cdot \p_s\eta^\epsilon.
\end{split}
\end{equation}
In Section \ref{sec:limiting_prob} we will show that the limit of $(\eta^\epsilon, \kappa^\epsilon,\sigma^\epsilon)$ provides a solution to \eqref{eq:whip2}-\eqref{eq:bdry2}.

We may consider the associated energy to the above approximation system
\begin{equation}\label{eq:perturb_energy}
\mathcal{E}^\epsilon(\eta):=\int_0^1\widetilde{G}^\epsilon(\p_s\eta)\ ds+\int_0^1 (-g)\cdot \eta\ ds,
\end{equation}
where
\begin{equation}\label{eq:perturb_energy2}
\widetilde{G}^\epsilon:\R^d\rightarrow \R, \quad \widetilde{G}^\epsilon(u)=\epsilon \left(\frac{|G^\epsilon(u)|^2}{2}-\frac{1}{\sqrt{\epsilon+|G^\epsilon(u)|^2}}\right).
\end{equation}
Then \eqref{eq:approx_whip} can be interpreted as a gradient flow with respect to the flat Hilbertian structure inherited from $L^2$, which is driven by this functional:
\begin{align*}
\p_t\eta = -\nabla_{L^2} \mathcal E ^\epsilon(\eta).\end{align*}
We refer to the proof of Theorem \ref{thm:limit} for a detailed computation.

Now we derive some energy inequalities with uniform (in $\epsilon$) bounds for the solutions $\eta^\epsilon$ in terms of the initial datum.
We stress that in the sequel $C$ will always stand for a constant independent of $\epsilon$. For simplicity in the sequel we sometimes drop the dependence on $\epsilon$ and write $\eta=\eta^\epsilon$, $G=G^\epsilon$, etc. We also write $\Omega:=(0,1)$ and $Q_t:=(0,t)\times \Omega$ for $t\in (0,\infty]$.

\begin{prop}\label{prop:energy}
Given $\eta_0^\epsilon\in C^\infty(\Omega)$ as in Remark \ref{rmk:initial}, let $\eta^\epsilon$ be the solution to the approximation problem \eqref{eq:approx_whip}-\eqref{eq:bdry} in $Q_\infty$. Then for any $T\in (0,\infty)$
\begin{equation}\label{eq:energy11}
\max_{t\in [0,T]}\int_\Omega |\eta(t,\cdot)|^2\ ds +\int_{Q_T}|\p_s\eta|^2\ dsdt\leq 4e^T \left(1+\int_\Omega |\eta_0|^2 \ ds\right),
\end{equation}
\begin{equation}\label{eq:energy22}
\int_{Q_T}|\p_t\eta^\epsilon|^2+|\nabla G(\p_s\eta^\epsilon)\cdot \p_{ss}\eta^\epsilon|^2 \ dsdt \leq 3\mathcal{E}^\epsilon(\eta^\epsilon_0)+ 7e^{T/2}\left(\|\eta^\epsilon_0\|_{L^2(\Omega)}+1\right)+3\sqrt{\epsilon}.
\end{equation}
Here $\mathcal{E}^\epsilon$, as defined in \eqref{eq:perturb_energy}, is the associated energy for the approximation problem.
\end{prop}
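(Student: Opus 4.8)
The plan is to exploit the gradient-flow structure of the approximation problem and to extract $\epsilon$-uniform estimates from a handful of elementary pointwise inequalities for the nonlinearity $G^\epsilon$. First I would record the variational structure explicitly. The map $F^\epsilon$ is the gradient of the strictly convex potential $\Phi^\epsilon(\kappa)=\frac{\epsilon}{2}|\kappa|^2+\sqrt{\epsilon+|\kappa|^2}$, so that $G^\epsilon=(F^\epsilon)^{-1}=\nabla(\Phi^\epsilon)^*$; computing the Legendre conjugate shows $(\Phi^\epsilon)^*=\widetilde G^\epsilon$, whence $\nabla\widetilde G^\epsilon=G^\epsilon$. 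Consequently \eqref{eq:approx_whip} is indeed the $L^2$-gradient flow of $\mathcal{E}^\epsilon$, and differentiating $\mathcal{E}^\epsilon(\eta^\epsilon(t))$ in time, integrating by parts in $s$, and using that the boundary terms vanish (because $\eta^\epsilon(t,1)=0$ forces $\p_t\eta^\epsilon(t,1)=0$, while $\p_s\eta^\epsilon(t,0)=0$ gives $G^\epsilon(\p_s\eta^\epsilon(t,0))=G^\epsilon(0)=0$) I obtain the exact energy balance
$$\|\p_t\eta^\epsilon\|_{L^2(Q_T)}^2=\mathcal{E}^\epsilon(\eta_0^\epsilon)-\mathcal{E}^\epsilon(\eta^\epsilon(T,\cdot)).$$
All these manipulations are legitimate since $\eta^\epsilon$ is the smooth Amann solution.

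Next I would isolate three pointwise facts about $G^\epsilon$, all uniform in $\epsilon$, that drive everything. Writing $u=\p_s\eta^\epsilon$ and $\kappa=G^\epsilon(u)$, one has $u=\epsilon\kappa+\kappa/\sqrt{\epsilon+|\kappa|^2}$, which yields: (i) $G^\epsilon(u)\cdot u=\epsilon|\kappa|^2+|\kappa|^2/\sqrt{\epsilon+|\kappa|^2}\ge0$; (ii) since $|\kappa|/\sqrt{\epsilon+|\kappa|^2}<1$ we get $|u|\le\epsilon|\kappa|+1$, hence $|u|^2\le2\epsilon^2|\kappa|^2+2\le2\epsilon\,G^\epsilon(u)\cdot u+2$; and (iii) $\widetilde G^\epsilon$, viewed as a function of $|G^\epsilon(u)|$ alone, is increasing and minimized at $u=0$, so $\widetilde G^\epsilon\ge-\sqrt\epsilon$ and therefore $\int_0^1\widetilde G^\epsilon(\p_s\eta^\epsilon)\,ds\ge-\sqrt\epsilon$.

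With these in hand, estimate \eqref{eq:energy11} follows by testing the equation with $\eta^\epsilon$ itself. The boundary terms again vanish, giving $\frac12\frac{d}{dt}\|\eta^\epsilon\|_{L^2}^2+\int_0^1 G^\epsilon(\p_s\eta^\epsilon)\cdot\p_s\eta^\epsilon\,ds=\int_0^1 g\cdot\eta^\epsilon\,ds$. Dropping the nonnegative dissipation term by (i) and bounding $\int_0^1 g\cdot\eta^\epsilon\le\frac12+\frac12\|\eta^\epsilon\|_{L^2}^2$, Gr\"onwall's inequality controls $\max_{[0,T]}\|\eta^\epsilon\|_{L^2}$ by $C(T)(\|\eta_0^\epsilon\|_{L^2}+1)$. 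Integrating the same identity over $[0,T]$ bounds $\int_{Q_T}G^\epsilon(\p_s\eta^\epsilon)\cdot\p_s\eta^\epsilon$ by $\frac12\|\eta_0^\epsilon\|_{L^2}^2+T\max_{[0,T]}\|\eta^\epsilon\|_{L^2}$, and then (ii) together with $\epsilon\le1$ converts this into the desired control of $\|\p_s\eta^\epsilon\|_{L^2(Q_T)}$, the $\epsilon$-prefactor rendering the dissipation contribution harmless.

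Finally, estimate \eqref{eq:energy22} comes from the energy balance combined with (iii). I would lower-bound $\mathcal{E}^\epsilon(\eta^\epsilon(T,\cdot))\ge-\sqrt\epsilon-\|\eta^\epsilon(T,\cdot)\|_{L^2}$ using (iii) and $|g|=1$, so that $\|\p_t\eta^\epsilon\|_{L^2(Q_T)}^2\le\mathcal{E}^\epsilon(\eta_0^\epsilon)+\sqrt\epsilon+C(T)(\|\eta_0^\epsilon\|_{L^2}+1)$ after invoking the first estimate. For the second term I would use the chain rule $\nabla G^\epsilon(\p_s\eta^\epsilon)\,\p_{ss}\eta^\epsilon=\p_s(G^\epsilon(\p_s\eta^\epsilon))=\p_t\eta^\epsilon-g$, so that $\|\nabla G^\epsilon(\p_s\eta^\epsilon)\,\p_{ss}\eta^\epsilon\|_{L^2(Q_T)}^2\le2\|\p_t\eta^\epsilon\|_{L^2(Q_T)}^2+2T$; adding the two squared bounds produces precisely the factor $3$ in front of $\mathcal{E}^\epsilon(\eta_0^\epsilon)$ and the term $3\sqrt\epsilon$ appearing on the right-hand side of \eqref{eq:energy22}. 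The main obstacle throughout is that $G^\epsilon$ and $\widetilde G^\epsilon$ are defined only implicitly, so the whole argument hinges on distilling the correct $\epsilon$-uniform inequalities (i)--(iii); in particular the bound $|u|^2\lesssim\epsilon\,G^\epsilon(u)\cdot u+1$ is genuinely degenerate, and it is exactly this degeneracy at large gradients that keeps the estimate on $\|\p_s\eta^\epsilon\|_{L^2(Q_T)}$ uniform as $\epsilon\to0$.
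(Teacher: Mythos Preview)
Your proposal is correct and follows essentially the same route as the paper: test with $\eta^\epsilon$ plus Gr\"onwall for \eqref{eq:energy11}, and use the exact energy balance $\|\p_t\eta^\epsilon\|_{L^2(Q_T)}^2=\mathcal E^\epsilon(\eta_0^\epsilon)-\mathcal E^\epsilon(\eta^\epsilon(T,\cdot))$ together with the lower bound $\widetilde G^\epsilon\ge-\sqrt\epsilon$ and the identity $\nabla G^\epsilon(\p_s\eta^\epsilon)\p_{ss}\eta^\epsilon=\p_t\eta^\epsilon-g$ for \eqref{eq:energy22}. The only noteworthy difference is in how you convert the bound on $\int_{Q_T}G^\epsilon(\p_s\eta^\epsilon)\cdot\p_s\eta^\epsilon$ into control of $\|\p_s\eta^\epsilon\|_{L^2(Q_T)}$: the paper splits into the regions $\{|\p_s\eta^\epsilon|\lessgtr 1+\sqrt\epsilon\}$ and proves $G^\epsilon(u)\cdot u\ge(\epsilon+\sqrt\epsilon)^{-1}|u|^2$ on the large-gradient set, whereas your global inequality $|u|^2\le 2\epsilon\,G^\epsilon(u)\cdot u+2$ avoids the case distinction and is arguably cleaner, though both exploit the same degeneracy.
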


\begin{proof}
\emph{Proof of \eqref{eq:energy11}.} We take the inner product of the equation with $\eta$ and integrate in space and time. After an integration by parts in space we obtain
\begin{align*}
\int_{Q_t}\p_t\eta \cdot \eta\ dsdt = -\int_{Q_t}G(\p_s\eta)\cdot\p_s\eta\ ds dt + \int_{Q_t} g\cdot\eta \ dsdt\text{ for any }t\in (0,T].
\end{align*}
Applying the Cauchy-Schwartz inequality we get
\begin{equation}\label{eq:energy1}
\begin{split}
&\quad \frac{1}{2}\int_\Omega |\eta(t,s)|^2\ ds +\int_{Q_t}G(\p_s\eta)\cdot\p_s\eta\ ds dt\\
& \leq \frac{1}{2}\int_\Omega |\eta_0(s)|^2\ ds+\frac{1}{2}\int_{Q_t}|\eta|^2 \ dsdt+ \frac{1}{2}\int_{Q_t}|g|^2\ dsdt.
\end{split}
\end{equation}
Observe that the second term on the left side of \eqref{eq:energy1} is nonnegative. By Gronwall's inequality
\begin{equation}\label{eq:eta_L2}
\int_\Omega |\eta(t,s)|^2\ ds \leq e^{t}\left(\int_{\Omega}|\eta_0|^2\ ds+1\right)\text{ for any }t\in [0,T].
\end{equation}
Maximizing both sides in $t$, we obtain the estimate for $\max_{t\in [0,T]}\|\eta(t,\cdot)\|_{L^2(\Omega)}$.

To show the estimate for $\|\p_s\eta\|_{L^2(Q_T)}$, we assume for now the following estimate holds (whose proof will be provided later):
\begin{equation}\label{eq:partial_eta}
G(\p_s\eta)\cdot\p_s\eta\geq \frac{1}{\epsilon+\sqrt{\epsilon}}|\p_s\eta|^2 \text{ in } \{(t,s)\in Q_t: |\p_s\eta(t,s)|\geq 1+\sqrt{\epsilon}\}.
\end{equation}
Applying \eqref{eq:partial_eta} to \eqref{eq:energy1} we obtain
\begin{equation*}
\begin{split}
\int_{Q_t}|\p_s\eta|^2\ dsdt&\leq ({\epsilon+\sqrt{\epsilon}})\int_{Q_t}G(\p_s\eta)\cdot\p_s\eta\ dsdt + (1+\sqrt{\epsilon})^2|Q_t|\\
&\leq \frac{{\epsilon+\sqrt{\epsilon}}}{2}\int_\Omega |\eta_0(s)|^2ds+\frac{{\epsilon+\sqrt{\epsilon}}}{2}\int_{Q_t}|\eta|^2 \ dsdt+ \left((1+\sqrt{\epsilon})^2+\frac{{\epsilon+\sqrt{\epsilon}}}{2}\right)|Q_t|.
\end{split}
\end{equation*}
This together with \eqref{eq:eta_L2} (after taking the suprema over $t$) yields
\begin{equation*}
\max_{t\in [0,T]}\int_\Omega |\eta(t,\cdot)|^2\ ds +\int_{Q_T}|\p_s\eta|^2\ dsdt\leq 4e^T \left(1+\int_\Omega |\eta_0|^2 \ ds\right).
\end{equation*}

It remains to prove \eqref{eq:partial_eta}.
By the explicit expression of $F$ in \eqref{eq:F_eps} and $G=F^{-1}$ one has  $\tau=\epsilon G(\tau)+(\epsilon+|G(\tau)|^2)^{-1/2}G(\tau)$, $\tau\in \R^d$. Thus $G(\tau)\cdot \tau =\frac{|\tau|^2}{\epsilon+(\epsilon+|G(\tau)|^2)^{-1/2}}$. Using the monotonicity of $r\mapsto \tilde{F}(r):=\epsilon r+\frac{r}{\sqrt{\epsilon+r^2}}$ in $[0,\infty)$ and $\tilde{F}(\frac{1}{\sqrt{\epsilon}})<1+\sqrt{\epsilon}$ one can conclude that \begin{equation}\label{eq:estimate_G}
|G(\tau)|\geq \frac{1}{\sqrt{\epsilon}}\text{ in }\{\tau: |\tau|\geq 1+\sqrt{\epsilon}\}.
\end{equation}
 Hence $$G(\tau)\cdot \tau \geq \frac{|\tau|^2}{\epsilon+(\epsilon+\epsilon^{-1})^{-1/2}}\geq \frac{1}{{\epsilon+\sqrt{\epsilon}}}|\tau|^2$$ if $|\tau|\geq 1+\sqrt{\epsilon}$. This implies the claimed estimate \eqref{eq:partial_eta}.\\

\emph{Proof of \eqref{eq:energy22}.} We take the inner product of the equation with $\p_t \eta$ and integrate over $Q_t$. After some integration by parts we obtain
\begin{align*}
\int_{Q_t}|\p_t \eta|^2\ dsdt = -\int_{Q_t} G(\p_s\eta)\p_{st}\eta \ ds dt + \int_\Omega g\cdot \eta(t,\cdot)\ ds-\int_\Omega g\cdot \eta_0 \ ds.
\end{align*}
Here we have used the boundary conditions $\p_t\eta(t,1)=0$ and $\p_s\eta(t,0)=0$. Since $ G(\p_s\eta)\p_{st}\eta= \p_t \widetilde{G}(\p_s\eta)$ (cf. \eqref{eq:G} for detailed computation, and recall the definition of $\widetilde{G}$ in \eqref{eq:perturb_energy2}), we have
\begin{align*}
&\int_{Q_t}|\p_t \eta|^2\ dsdt+ \int_\Omega \widetilde{G}(\p_s\eta)(t,\cdot)\ ds+\int_\Omega (-g)\cdot \eta(t,\cdot)\ ds\\
& = \int_\Omega \widetilde{G}(\p_s\eta_0)\ ds +  \int_\Omega (-g)\cdot \eta_0 \ ds,
\end{align*}
or in other words $$\int_{Q_t}|\p_t \eta|^2\ dsdt+ \mathcal{E}(\eta(t,\cdot))\leq \mathcal{E}(\eta_0).$$ This in particular gives the decay of the energy
$$\mathcal{E}(\eta(t,\cdot))\leq \mathcal{E}(\eta_0)<\infty\text{ for any } t\in (0,T].$$
By virtue of $\widetilde{G}\geq -\sqrt{\epsilon}$, \eqref{eq:eta_L2} and recalling the definition of $\mathcal{E}$ in \eqref{eq:perturb_energy}, we have
$$\mathcal{E}(\eta(t,\cdot))\geq -e^{T/2}(\|\eta_0\|_{L^2(\Omega)}+1) -\sqrt{\epsilon}.$$
Thus
\begin{equation*}
\int_{Q_T}|\p_t \eta|^2\ dsdt\leq \mathcal{E}(\eta_0)+ e^{T/2}(\|\eta_0\|_{L^2(\Omega)}+1)+\sqrt{\epsilon}.
\end{equation*}
On the other hand, from the equation $\p_s G(\p_s\eta)=\p_t\eta -g$ we deduce
\begin{equation*}
\int_{Q_T}|\nabla G\cdot \p_{ss}\eta|^2 \ ds dt=\int_{Q_T}|\p_sG(\p_s\eta)|^2 \ dsdt \leq 2 \int_{Q_T}|\p_t\eta|^2+ 2\int_{Q_T}|g|^2.
\end{equation*}
The last two inequalities together yield \eqref{eq:energy22}.
\end{proof}

\begin{rmk}\label{rmk:initial}
Given any initial datum $\eta_0\in W^{1,\infty}([0,1])$ with $\eta_0(1)=0$ and $|\p_s\eta_0(s)|\leq 1$ to the original problem \eqref{eq:whip}-\eqref{eq:bdry2}, it is possible to find smooth $\eta_0^\epsilon$ satisfying $\eta^\epsilon_0(1)=0$ and $\p_s\eta^\epsilon_0(0)=0$, such that $\eta_0^\epsilon\rightarrow \eta_0$ uniformly and $|\p_s\eta_0^\epsilon|\leq 1$ (for example, one can first extend $\eta_0$ to $\R$, then consider $\widetilde{\eta}_0^\epsilon = \eta_0\ast \phi_\epsilon$, where $\phi_\epsilon = \phi(s/\epsilon)/\epsilon$, $\phi\geq 0$, $\|\phi\|_{L^1}=1$ is the standard mollifier. It is straightforward to check that $\widetilde{\eta}^\epsilon_0\rightarrow \eta_0$ in $C([0,1])$ and $|\p_s\widetilde{\eta}^\epsilon_0|\leq 1$. To make sure that the boundary conditions are satisfied, one applies an additional cut-off to $\p_s\widetilde{\eta}^\epsilon_0$ around $0$, i.e. let $\eta^\epsilon_0(s):=-\int_s^1\chi_\epsilon (x)\p_s\widetilde{\eta}^\epsilon_0(x)dx $, where $\chi_\epsilon\in C^\infty(\R)$ satisfies $0\leq \chi_\epsilon\leq 1$,  $\chi_\epsilon =1$ in $[\epsilon,\infty)$ and $\chi_\epsilon=0$ in $(-\infty,0)$).

For such approximations the initial energies $\mathcal{E}^\epsilon(\eta_0^\epsilon)$ are uniformly bounded in $\epsilon\in (0,1/16)$. To see this, let $\tilde{F}$ be the same as in the proof for \eqref{eq:estimate_G}. Observing that $\tilde{F}(\frac{1}{\sqrt{\epsilon}})>1$ and using the monotonicity of $\tilde{F}$ one has $|G^\epsilon(\tau)|\leq \frac{1}{\sqrt{\epsilon}}$ if $|\tau|\leq 1$. Then it is not hard to see from \eqref{eq:perturb_energy} that $\mathcal{E}^\epsilon(\eta_0^\epsilon)\leq 2$ if $|\p_s\eta_0^\epsilon|\leq 1$.
\end{rmk}

Next we estimate $\sup_{Q_T}|\p_s\eta^\epsilon|$. The proof is inspired by the $L^\infty$-estimate for the parabolic quasi-linear system (cf. \cite[Chapter VII]{LSU}). For this we take $\p_s$ of the equation and let $u^\epsilon:=\p_s\eta^\epsilon$. Then $u^\epsilon$ solves the problem
\begin{equation*}
\begin{split}
\begin{cases}
\p_t u^\epsilon = \p_s(\nabla G(u^\epsilon)\p_su^\epsilon),\\
u^\epsilon(t,0)=0,\quad \p_sG(u^\epsilon)(t,1)=-g,\\
u^\epsilon(0,s)=\p_s \eta^\epsilon_0(s)= :u^\epsilon_0(s).
\end{cases}
\end{split}
\end{equation*}
We aim to prove the weak type maximum principle:

\begin{prop}\label{prop:max_uniform}
Let $u^\epsilon$ and $u^\epsilon_0$ be as above. Then there exists a positive constant $C=C(T)$ such that
$$\sup_{Q_T}|u^\epsilon|\leq C\sup _\Omega |u^\epsilon_0|.$$
\end{prop}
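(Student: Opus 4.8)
The plan is to reduce the vector problem for $u^\epsilon=\p_s\eta^\epsilon$ to a scalar parabolic differential \emph{inequality} for $w:=|u^\epsilon|^2$ and then to run a Stampacchia/De Giorgi level-set iteration, in the spirit of \cite[Chapter VII]{LSU}. The reduction exploits the radial structure of $F^\epsilon$ in \eqref{eq:F_eps}: since $F^\epsilon(\kappa)=\big(\epsilon+(\epsilon+|\kappa|^2)^{-1/2}\big)\kappa$ is radial, so is $G^\epsilon$, whence $\nabla G^\epsilon(u)$ is symmetric positive definite with $u/|u|$ as eigenvector, the corresponding eigenvalue being exactly $\Lambda_\epsilon$ from \eqref{eq:G_eps}. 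Writing $A:=\nabla G^\epsilon(u^\epsilon)$ and using $u\cdot A\,\p_s u=\Lambda_\epsilon\,(u\cdot\p_s u)=\tfrac12\Lambda_\epsilon\,\p_s w$, I would compute
\[
\p_t w=2u^\epsilon\cdot\p_s(A\,\p_s u^\epsilon)=\p_s(\Lambda_\epsilon\,\p_s w)-2\,\p_s u^\epsilon\cdot A\,\p_s u^\epsilon\le \p_s(\Lambda_\epsilon\,\p_s w),
\]
so that $w$ is a subsolution of a scalar divergence-form parabolic equation with the strictly positive (but $u$-dependent) coefficient $\Lambda_\epsilon$. The initial/boundary data transcribe to $w(0,\cdot)=|u_0^\epsilon|^2\le k_0^2$ with $k_0:=\sup_\Omega|u_0^\epsilon|$, $w(t,0)=0$, and $\Lambda_\epsilon\,\p_s w(t,1)=-2\,u^\epsilon(t,1)\cdot g$.

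Second, I would set up the energy/level-set estimate. For a level $k\ge k_0$ put $W:=(w-k^2)_+$; testing the inequality above with $W$ and integrating by parts over $\Omega$ gives, since $W(t,0)=0$,
\[
\frac{d}{dt}\int_\Omega \tfrac12 W^2\,ds+\int_{\{w>k^2\}}\Lambda_\epsilon\,|\p_s w|^2\,ds\le -2\big(u^\epsilon(t,1)\cdot g\big)\,W(t,1).
\]
Integrating in time and invoking the one-dimensional trace/embedding $H^1(\Omega)\hookrightarrow L^\infty(\Omega)$ would then produce the standard recursive bound for the energies at an increasing sequence of levels $k_n\uparrow k_\infty$, whose super-geometric decay forces $w\le k_\infty^2$ a.e., i.e. the asserted $\sup_{Q_T}|u^\epsilon|\le C(T)\,k_0$.

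The \textbf{main obstacle} is the boundary term $-2(u^\epsilon(t,1)\cdot g)\,W(t,1)$: it carries no sign (the inhomogeneous Neumann flux $\p_s G^\epsilon(u^\epsilon)(t,1)=-g$ may pump $|u^\epsilon|$ up near $s=1$) and it is \emph{cubic} in the natural unknown, being of the type $\sqrt{w(t,1)}\,W(t,1)$. To absorb it into the coercive term \emph{uniformly in} $\epsilon$, I would use that on the active set $\{|u^\epsilon|>k\}$, as soon as $k\ge 1+\sqrt\epsilon$, estimate \eqref{eq:estimate_G} yields $|G^\epsilon(u^\epsilon)|\ge\epsilon^{-1/2}$, and then \eqref{eq:G_eps} forces the diffusion to be \emph{large}: $\Lambda_\epsilon\ge \tfrac12\epsilon^{-1}\ge\tfrac12$ for $\epsilon\le1$. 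Hence $\int_{\{w>k^2\}}\Lambda_\epsilon|\p_s w|^2\ge\tfrac12\|\p_s W\|_{L^2(\Omega)}^2$ with an $\epsilon$-independent constant, which is precisely the coercivity needed to dominate the trace $W(t,1)$ and close the iteration. (If $k_0<1+\sqrt\epsilon$ one simply starts the iteration at level $1+\sqrt\epsilon$, which only alters $C(T)$ and is harmless since in the application $k_0\le1+\sqrt\epsilon$.) The genuinely technical point is the bookkeeping of the time-integrated boundary term: estimating $\int_0^T\sqrt{w(t,1)}\,W(t,1)\,dt$ by interpolation and splitting it between $\sup_t\int_\Omega W^2$ and $\int_0^T\|\p_s W\|_{L^2(\Omega)}^2$, which is also where the factor $C(T)$—reflecting the slow growth of the bound under the sustained boundary flux—naturally appears.
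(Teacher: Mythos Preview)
Your strategy coincides with the paper's: both run a Stampacchia/De~Giorgi level-set argument for $v^{(k)}=(|u^\epsilon|^2-k)_+$, and the decisive $\epsilon$-independent input is exactly the one you isolate, namely that on $\{|u^\epsilon|\geq 1+\sqrt\epsilon\}$ one has $|G^\epsilon(u^\epsilon)|\geq\epsilon^{-1/2}$, which forces the diffusion eigenvalues to be uniformly bounded below (the paper phrases this as $\lambda_\epsilon\geq 1$, you as $\Lambda_\epsilon\geq\tfrac12\epsilon^{-1}$). Your scalar reduction via the radial eigenvector identity $u\cdot\nabla G^\epsilon(u)\,\xi=\Lambda_\epsilon\,(u\cdot\xi)$ is a clean repackaging not made explicit in the paper; the paper instead tests the vector equation directly with $u\,v^{(k)}$, which has the minor advantage of retaining the extra coercive term $\int|\p_s u|^2 v^{(k)}$ that your inequality $\p_t w\leq\p_s(\Lambda_\epsilon\p_s w)$ discards.

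The one place where the paper is noticeably more direct is the boundary term you flag as ``the main obstacle''. Rather than trace/interpolation, the paper exploits the homogeneous Dirichlet datum $u(t,0)=0$ and the fundamental theorem of calculus to rewrite the boundary contribution as a bulk integral,
\[
\int_0^T(-g)\cdot(u\,v^{(k)})(t,1)\,dt=\int_{Q_T}(-g)\cdot\p_s(u\,v^{(k)})\,ds\,dt,
\]
after which Cauchy--Schwarz splits this into pieces absorbed by the two coercive terms plus a remainder $\int_{Q_T}v^{(k)}+\int_{Q_k}|u|^2\leq Ck\int_0^T|A_k(t)|\,dt$. This sidesteps the ``cubic'' difficulty and the interpolation you leave open, and yields directly the level-set energy inequality $\sup_t\|v^{(k)}\|_{L^2}^2+\|\p_s v^{(k)}\|_{L^2(Q_T)}^2\leq Ck\int_0^T|A_k(t)|\,dt$, from which the conclusion follows by \cite[Chap.~II, Theorem~6.1]{LSU}.
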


\begin{proof}
In the proof we drop the dependence on $\epsilon$ for simplicity. Let $k$ be a constant and $k\geq \max\{\sup _\Omega |u_0|, (1+\sqrt{\epsilon})^2\}$. We take the inner product of the equation with $u(|u|^2-k)_+$ and denote $v^{(k)}:=(|u|^2-k)_+$. Integrating over $Q_T$ and after an integration by part we obtain
\begin{equation} \label{e:rhside} \begin{split}
\int_{Q_T} \p_t u \cdot u v^{(k)} \ dsdt = \int_0^T (-g)\cdot ( uv^{(k)})(t,1) \ dt \\
-\int_{Q_T}\nabla G(u)\p_s u\cdot \p_s u v^{(k)}\ dsdt- \int_{Q_T} \nabla G(u)\p_su\cdot u\p_s v^{(k)}\ dsdt.  \end{split}
\end{equation}
We denote $A_k(t):=\{s\in \Omega: |u|^2(t,s)>k\}$ and $Q_k:=\{(t,s)\in Q_T: |u|^2(t,s)>k\}$. First we note that using $$\p_t u \cdot u v^{(k)} =\frac{1}{4} \p_t |v^{(k)}|^2$$ and the choice of $k$ (such that $v^{(k)}(0,\cdot)=0$) we deduce
\begin{align*}
\int_{Q_T} \p_t u \cdot u v^{(k)} \ dsdt = \frac{1}{4}\int_\Omega |v^{(k)}|^2(T,\cdot) \ ds .
\end{align*}
For the last two terms in the right hand side of \eqref{e:rhside}, using $\lambda_\epsilon(\tau)\geq 1$ if $|\tau|^2\geq (1+\sqrt{\epsilon})^2$, which follows from \eqref{eq:epsilon} and from the expression for $\lambda_\epsilon$ in \eqref{eq:G_eps} and \eqref{eq:estimate_G}, we get
\begin{align*}
\int_{Q_T}\nabla G(u)\p_s u\cdot \p_s u v^{(k)}\ dsdt \geq \int_{Q_T} |\p_s u|^2 v^{(k)} \ dsdt.
\end{align*}
Similarly,
\begin{align*}
 \int_{Q_T} \nabla G(u)\p_su\cdot u \p_s v^{(k)}\ dsdt \geq \frac{1}{2} \int_{Q_T}|\p_s v^{(k)}|^2 \ dsdt.
\end{align*}
For the first term on the right hand side of \eqref{e:rhside}, we use the the boundary condition $u(t,0)=0$ and fundamental theorem of calculus to write
\begin{align*}
\int_0^T (-g)\cdot ( uv^{(k)})(t,1) \ dt & = \int_{Q_T} (-g)\cdot \p_s(u v^{(k)}) \ dsdt\\
&= \int_{Q_T} (-g)\cdot \p_s u v^{(k)} \ dsdt + \int_{Q_T} (-g)\cdot u \p_s v^{(k)} \ dsdt.
\end{align*}
By the Cauchy-Schwartz inequality and that $\supp (v^{(k)}), \supp (\p_sv^{(k)})\subset Q_k$,
\begin{align*}
\int_0^T (-g)\cdot ( uv^{(k)})(t,1) \ dt& \leq \frac{1}{4}\int_{Q_k} |\p_su|^2 v^{(k)} \ dsdt + \int_{Q_k}v^{(k)} \ dsdt \\
&+\frac{1}{4}\int_{Q_k}|\p_s v^{(k)}|^2 \ dsdt +  \int_{Q_k} |u|^2 \ dsdt.
\end{align*}
Combining all above inequalities we deduce that there exists  a universal constant $C$ (independent of $\epsilon$) such that
\begin{align*}
&\sup_{t\in[0,T]} \int_\Omega |v^{(k)}|^2(t,\cdot) \ ds + \int_{Q_T} |\p_s u|^2 v^{(k)} \ dsdt +  \int_{Q_T}|\p_s v^{(k)}|^2 \ dsdt\\
&\leq C\int_{Q_k} v^{(k)} \ dsdt + C \int_{Q_k} |u|^2 \ dsdt\\
&\leq C \int_{Q_k} v^{(k)} \ dsdt + C\int_{Q_k} k \ dsdt,
\end{align*}
where in the last inequality we have used $|u|^2 \leq (|u|^2-k)_++ k= v^{(k)}+k$. Using the Cauchy-Schwartz and H\"older inequalities, we have
\begin{align*}
(\text{RHS})\leq \frac{1}{4}\sup_{t\in [0,T]}\int_\Omega |v^{(k)}|^2(t,\cdot) \ ds  + C \int_0^T |A_k(t)|\ dt + Ck\int_0^T |A_k(t)| \ dt.
\end{align*}
Note that the first term can be absorbed by the left hand side. Thus,
\begin{align*}
\sup_{t\in[0,T]} \int_\Omega |v^{(k)}|^2(t,\cdot) \ ds +\int_{Q_T}|\p_s v^{(k)}|^2 \ dsdt \leq Ck \int_0^T |A_k(t)| \ dt.
\end{align*}
By Theorem 6.1 in Chapter II of \cite{LSU} we conclude
\begin{equation*}
\sup_{Q_T}|u|^2(t,s) \leq 2 (1+C) \sup_{\Omega}|u_0|^2(s).
\end{equation*}
This completes the proof.
\end{proof}

In the next proposition we derive compactness results and show that weak limits of approximation solutions solve almost everywhere the original evolutionary equation with a slightly relaxed constraint.

\begin{prop}\label{prop:uniform}
Given any $T>0$, let $\eta^\epsilon$ be a solution to \eqref{eq:approx_whip} in $Q_T$ with the initial/boundary conditions \eqref{eq:bdry}. Let $(\kappa^\epsilon,\sigma^\epsilon)$ be as in \eqref{eq:kappa_sigma}. Assume that $\|\eta_0^\epsilon\|_{L^2(\Omega)}$ and $\mathcal{E}^\epsilon(\eta_0^\epsilon)$ are bounded uniformly in $\epsilon$. Then
\begin{itemize}
\item [(i)] Along a subsequence $\epsilon\rightarrow 0$ one has
\begin{align*}
\eta^\epsilon &\rightarrow \eta \text{ weakly* in } L^\infty(0,T; W^{1,\infty}(\Omega))^d \text{ and strongly in } L^2(Q_T)^d,\\
\p_t\eta^\epsilon &\rightarrow \p_t \eta \text{ weakly in } L^2(Q_T)^d,\\
\sigma^\epsilon &\rightarrow \sigma \text{ weakly in } L^2(0,T; H^1(\Omega)).
\end{align*}
\item [(ii)] The limit $(\eta,\sigma)$ satisfies $$\sigma \p_s\eta\in L^2(0,T; H^1(\Omega))^d$$ and solves \eqref{eq:whip}-\eqref{eq:bdry2} in the sense that
\begin{align*}
& \p_t\eta = \p_s(\sigma\p_s\eta) + g, \qquad\sigma \left(|\p_s\eta|^2 -1\right ) =0 \text{ a.e. in } Q_T,\\
& \eta(t,1)=0\text{ for all } t,\  \eta(0,s)=\eta_0(s)  \text{ for all } s,\ \sigma(t,0)=0  \text{ for a.e. } t.
\end{align*}
\end{itemize}
\end{prop}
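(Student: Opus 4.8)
The plan is to combine the uniform estimates of Propositions \ref{prop:energy} and \ref{prop:max_uniform} to extract convergent subsequences, and then to identify the degenerate nonlinear limit by a monotonicity argument.

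First I would assemble the uniform bounds. Proposition \ref{prop:max_uniform} together with the choice of $\eta_0^\epsilon$ from Remark \ref{rmk:initial} gives $\sup_{Q_T}|\p_s\eta^\epsilon|\le C$, and since $\eta^\epsilon(t,1)=0$ this upgrades to a uniform $L^\infty(0,T;W^{1,\infty}(\Omega))$ bound on $\eta^\epsilon$; Proposition \ref{prop:energy} (with $\mathcal{E}^\epsilon(\eta_0^\epsilon)$ bounded) controls $\p_t\eta^\epsilon$ in $L^2(Q_T)$. From the equation in the form $\p_s\kappa^\epsilon=\p_t\eta^\epsilon-g$ and the boundary value $\kappa^\epsilon(t,0)=G^\epsilon(\p_s\eta^\epsilon(t,0))=G^\epsilon(0)=0$ one gets, via the fundamental theorem of calculus, $\|\kappa^\epsilon(t,\cdot)\|_{L^\infty(\Omega)}\le\|\p_s\kappa^\epsilon(t,\cdot)\|_{L^2(\Omega)}$, so $\kappa^\epsilon$ is bounded in $L^2(0,T;H^1(\Omega))$. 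For $\sigma^\epsilon=\kappa^\epsilon\cdot\p_s\eta^\epsilon$ I would exploit that $G^\epsilon$ is radial, so $\kappa^\epsilon$ is pointwise parallel to $\p_s\eta^\epsilon$ and $\sigma^\epsilon$ is a scalar function of $|\p_s\eta^\epsilon|$ alone; a short computation then yields the pointwise estimate $|\p_s\sigma^\epsilon|\le C\,|\p_t\eta^\epsilon-g|\,|\p_s\eta^\epsilon|$, whence $\sigma^\epsilon$ is bounded in $L^2(0,T;H^1(\Omega))$ as well. The weak convergences in (i) then follow along a subsequence, while the strong $L^2(Q_T)$ convergence of $\eta^\epsilon$ comes from Aubin--Lions applied to the $L^\infty(0,T;H^1)$ bound on $\eta^\epsilon$ and the $L^2$ bound on $\p_t\eta^\epsilon$.

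Next I would record the constraint $|\p_s\eta|\le 1$. Energy decay keeps $\int_{Q_T}\widetilde{G}^\epsilon(\p_s\eta^\epsilon)$ bounded; since $\widetilde{G}^\epsilon(v)\ge\frac{(|v|-1)^2}{2\epsilon}-\sqrt\epsilon$ for $|v|\ge 1+\sqrt\epsilon$, comparing with a fixed nonnegative convex $j_\delta$ vanishing on $\{|v|\le 1+\delta\}$ gives $\int_{Q_T}j_\delta(\p_s\eta^\epsilon)\to 0$, and weak lower semicontinuity of $v\mapsto\int j_\delta(v)$ forces $|\p_s\eta|\le 1+\delta$ a.e.; letting $\delta\to 0$ gives $|\p_s\eta|\le 1$. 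The ``linear'' parts of (ii) are then routine: passing to the limit in $\p_t\eta^\epsilon=\p_s\kappa^\epsilon+g$ gives $\p_t\eta=\p_s\kappa+g$ with $\kappa=\lim\kappa^\epsilon\in L^2(0,T;H^1)$; the condition $\eta(t,1)=0$ survives by trace, $\sigma(t,0)=0$ because $\sigma^\epsilon(t,0)=0$ and the weak $H^1$-limit preserves the trace, and $\eta(0,s)=\eta_0(s)$ by weak continuity in time together with $\eta_0^\epsilon\to\eta_0$. Testing $\sigma^\epsilon=\kappa^\epsilon\cdot\p_s\eta^\epsilon$ against $\phi$, integrating by parts in $s$ (boundary terms vanish since $\eta^\epsilon(t,1)=0$ and $\kappa^\epsilon(t,0)=0$) and using $\p_s\kappa^\epsilon=\p_t\eta^\epsilon-g$ with weak--strong convergence identifies $\sigma=\kappa\cdot\p_s\eta$.

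The heart of the matter, and the step I expect to be the main obstacle, is the nonlinear identification $\kappa=\sigma\p_s\eta$ (which simultaneously yields the relaxed constraint), because $G^\epsilon$ degenerates as $\epsilon\to 0$ and only weak convergence of $\p_s\eta^\epsilon$ and $\kappa^\epsilon$ is available. Here I would run Minty's trick: since $G^\epsilon=\nabla\widetilde{G}^\epsilon$ is monotone, for every test field $w$ with $|w|<1$ one has $\int_{Q_T}(\kappa^\epsilon-G^\epsilon(w))\cdot(\p_s\eta^\epsilon-w)\ge 0$. As $\epsilon\to 0$ one checks $G^\epsilon(w)\to 0$ in $L^\infty(Q_T)$ for such $w$, while the crucial product term passes to the limit correctly, $\int_{Q_T}\kappa^\epsilon\cdot\p_s\eta^\epsilon\to\int_{Q_T}\kappa\cdot\p_s\eta$, by the same integration-by-parts identity combined with strong $L^2$ convergence of $\eta^\epsilon$ and weak convergence of $\p_t\eta^\epsilon$. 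This yields $\int_{Q_T}\kappa\cdot(\p_s\eta-w)\ge 0$ for all $|w|\le 1$. Using $|\p_s\eta|\le 1$ to localize and choosing $w=\kappa/|\kappa|$ forces $\kappa\cdot\p_s\eta=|\kappa|=|\kappa|\,|\p_s\eta|$ a.e., whence $\kappa=\sigma\p_s\eta$ with $\sigma=\kappa\cdot\p_s\eta\ge 0$ and $\sigma(|\p_s\eta|^2-1)=0$. This integration-by-parts trick, which converts the uncontrolled product $\kappa^\epsilon\cdot\p_s\eta^\epsilon$ into weak-times-strong terms by exploiting the boundary conditions $\eta^\epsilon(t,1)=0$ and $\kappa^\epsilon(t,0)=0$, is the linchpin of the whole argument.
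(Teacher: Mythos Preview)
Your proposal is correct but diverges from the paper's argument at two places, and in one of them the route is genuinely different.

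For the $L^2(0,T;H^1)$ bound on $\sigma^\epsilon$, the paper writes $\p_s\sigma^\epsilon=\p_s\kappa^\epsilon\cdot\p_s\eta^\epsilon+\kappa^\epsilon\cdot\p_{ss}\eta^\epsilon$ and controls the second term through the eigenvalue comparison $|\nabla G^\epsilon(\p_s\eta^\epsilon)\p_{ss}\eta^\epsilon|\ge c\,|G^\epsilon(\p_s\eta^\epsilon)\cdot\p_{ss}\eta^\epsilon|$, then invokes \eqref{eq:energy22}. Your radial shortcut is cleaner in spirit; note however that the precise pointwise inequality you state, $|\p_s\sigma^\epsilon|\le C|\p_t\eta^\epsilon-g|\,|\p_s\eta^\epsilon|$, is not quite right (the factor that appears is $|\p_s\eta^\epsilon|+|\kappa^\epsilon|\tilde F'(|\kappa^\epsilon|)\le 2|\p_s\eta^\epsilon|+1$, which does not vanish with $|\p_s\eta^\epsilon|$). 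The harmless correction $|\p_s\sigma^\epsilon|\le C|\p_s\kappa^\epsilon|=C|\p_t\eta^\epsilon-g|$ is what your computation actually yields and is sufficient.

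The substantive difference is in the identification $\kappa=\sigma\p_s\eta$. The paper does \emph{not} use Minty's trick: it exploits the explicit form of $F^\epsilon$ to show directly that $|\kappa^\epsilon|\,\big||\p_s\eta^\epsilon|-1\big|\le \epsilon|\kappa^\epsilon|^2+\sqrt{\epsilon}$, hence $\|\sigma^\epsilon\p_s\eta^\epsilon-\kappa^\epsilon\|_{L^1(Q_T)}\to 0$, and then passes to the limit via the same integration-by-parts/weak--strong device you describe. This is a one-line quantitative estimate tailored to the specific approximation. Your monotonicity argument is more robust (it would survive replacing $F^\epsilon$ by any strictly monotone regularization of the unit-ball indicator) and delivers $\sigma\ge 0$ and $\sigma(|\p_s\eta|^2-1)=0$ in one stroke, but it costs you the extra input $|\p_s\eta|\le 1$, which you obtain through the penalization bound on $\widetilde G^\epsilon$. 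The paper postpones $|\p_s\eta|\le 1$ to the next proposition and proves it more simply: $\p_s\eta^\epsilon-\epsilon\kappa^\epsilon=\kappa^\epsilon/\sqrt{\epsilon+|\kappa^\epsilon|^2}$ lies in the weakly closed $L^2$ unit ball, so the weak limit $\p_s\eta$ does too.
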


\begin{rmk} \label{rmk:ib}
The initial and boundary conditions make sense due to the embeddings \eqref{e:emb} and \eqref{e:emb2} which appear below in the proof.
\end{rmk}

\begin{proof}[Proof of Proposition \ref{prop:uniform} (i).]
The compactness results for $\eta^\epsilon$ follow immediately from the uniform energy bound in Proposition \ref{prop:energy} and the $L^\infty$ bound for $\p_s\eta^\epsilon$ from Proposition \ref{prop:max_uniform}. We will mainly derive the uniform boundedness of $\sigma^\epsilon$.

By a direct computation $\p_s\sigma^\epsilon = \p_s \kappa^\epsilon\cdot \p_s\eta^\epsilon + \kappa^\epsilon\cdot \p_{ss}\eta^\epsilon$. We will estimate the two terms in the summation separately. First by \eqref{eq:energy22} and Poincar\'e's inequality we immediately obtain that
$\kappa^\epsilon=G^\epsilon(\p_s\eta^\epsilon)$ are uniformly bounded in $L^2(0,T; H^1(\Omega))$.
This together with Proposition~\ref{prop:max_uniform} gives the uniform boundedness of $\p_s \kappa^\epsilon\cdot \p_s\eta^\epsilon$ in $L^2(Q_T)$. To estimate $\kappa^\epsilon\cdot \p_{ss}\eta^\epsilon$, we are going to show that
\begin{align}\label{eq:inequ}
|\nabla G^\epsilon(\p_s\eta^\epsilon)\p_{ss}\eta^\epsilon|\geq (C(T)+2)^{-1}|G^\epsilon(\p_s\eta^\epsilon)\cdot \p_{ss}\eta^\epsilon|.
\end{align}
Here $C(T)$ is the constant in Proposition \ref{prop:max_uniform}. To see this we first observe from the the explicit expression of $\lambda_\epsilon$ in \eqref{eq:G_eps} that
\begin{align*}
\lambda_\epsilon(\tau)=\frac{\sqrt{\epsilon+|G^\epsilon(\tau)|^2}}{\epsilon\sqrt{\epsilon+|G^\epsilon(\tau)|^2}+1}\geq \frac{|G^\epsilon(\tau)|}{\epsilon |G^\epsilon(\tau)|+1}, \ \tau\in \R^d.
\end{align*}
Thus
\begin{align*}
|\nabla G^\epsilon(\p_s\eta^\epsilon)\p_{ss}\eta^\epsilon|\geq |\lambda_\epsilon \p_{ss}\eta^\epsilon|\geq \frac{\left| G^\epsilon(\p_s\eta^\epsilon)\right|}{\epsilon | G^\epsilon(\p_s\eta^\epsilon)|+1}\left|\p_{ss}\eta^\epsilon \right|
\end{align*}
By the expression of $F^\epsilon$ and Proposition \ref{prop:max_uniform},
$$\epsilon|G^\epsilon(\p_s\eta^\epsilon)|=\epsilon|\kappa^\epsilon|\leq |\p_s\eta^\epsilon|+1\leq C(T)+1.$$ Combining the above two inequalities we thus obtain \eqref{eq:inequ}.

The estimate \eqref{eq:inequ} immediately implies that $\kappa^\epsilon\cdot \p_{ss}\eta^\epsilon=G^\epsilon(\p_s\eta^\epsilon)\cdot \p_{ss}\eta^\epsilon$ are uniformly bounded in $L^2(Q_T)$ (since $\nabla G^\epsilon(\p_s\eta^\epsilon)\p_{ss}\eta^\epsilon$ are uniformly bounded in $L^2(Q_T)$ by \eqref{eq:energy22}). Thus we have shown the uniform boundedness of $\sigma^\epsilon$ in $L^2(0,T; H^1(\Omega))$.

\emph{Proof of (ii).} From the uniform energy bound in Proposition \ref{prop:energy} we see that there exists $\kappa:=\lim\kappa^\epsilon$ in the weak topology of   $L^2(0,T; H^1(\Omega))$. Let us show that \begin{equation}\label{e:kseta}\begin{split}\kappa=\sigma \p_s\eta,\\ \sigma=\kappa\cdot \p_s\eta\end{split}\end{equation} a.e. in $Q_T$. Since both sides of the equalities \eqref{e:kseta} are integrable on $Q_T$, it suffices to prove \eqref{e:kseta} in the sense of distributions, i.e. that for any $\varphi\in L^2(0,T;H^1(\Omega))$
\begin{equation}\label{eq:kappa_equ}
\int_{Q_T} \kappa \varphi \ dsdt= -\int_{Q_T} \sigma\eta \p_s \varphi \ dsdt-\int_{Q_T} \p_s\sigma \eta\varphi \ dsdt,
\end{equation}
\begin{equation}\label{eq:sig_equ}
\int_{Q_T} \sigma \varphi \ dsdt= -\int_{Q_T} \kappa\cdot\eta\p_s \varphi \ dsdt-\int_{Q_T} \p_s\kappa \cdot\eta \varphi \ dsdt.
\end{equation}
Indeed, integrating by parts the distributional form of the equality $\sigma^\epsilon=\kappa^\epsilon\cdot \p_s\eta^\epsilon$ we see that
\begin{align*}
\int_{Q_T} \sigma^\epsilon  \varphi \ dsdt=-\int_{Q_T} \kappa^\epsilon\cdot\eta^\epsilon \p_s \varphi \ dsdt -\int_{Q_T} \p_s\kappa^\epsilon\cdot \eta^\epsilon\varphi \ dsdt,
\end{align*} and due to the strong compactness property of $\{\eta^\epsilon\}$ in (i) we can pass to the limit to get \eqref{eq:sig_equ}.
We now claim
\begin{align}\label{eq:sigma_equ}
\lim_{\epsilon\rightarrow 0}\int_{Q_T} |\kappa^\epsilon|\left||\p_s\eta^\epsilon|^2-1\right| =0.
\end{align}
Before proving the claim we show how \eqref{eq:kappa_equ} follows from \eqref{eq:sigma_equ}. Indeed, with \eqref{eq:sigma_equ} at hand and noting that $\kappa^\epsilon|\p_s\eta^\epsilon|^2= (\kappa^\epsilon\cdot \p_s\eta^\epsilon) \p_s\eta^\epsilon= \sigma^\epsilon \p_s\eta^\epsilon$ we have
\begin{equation}\label{eq:L1}
\lim_{\epsilon\rightarrow 0}\|\sigma^\epsilon\p_s\eta^\epsilon-\kappa^\epsilon\|_{L^1(Q_T)}=0.
\end{equation}
In particular, for any $\varphi\in L^2(0,T;H^1(\Omega))$
\begin{align*}
\lim_{\epsilon\rightarrow 0}\int_{Q_T} \kappa^\epsilon \cdot \varphi \ dsdt = \lim_{\epsilon\rightarrow 0} \int_{Q_T} \sigma^\epsilon\p_s\eta^\epsilon \cdot \varphi \ dsdt.
\end{align*}
An integration by parts applied to the integral on the right side gives
\begin{align*}
\lim_{\epsilon\rightarrow 0}\int_{Q_T} \kappa^\epsilon \cdot \varphi \ dsdt=\lim_{\epsilon\rightarrow 0}\left(-\int_{Q_T} \sigma^\epsilon\eta^\epsilon \p_s \varphi \ dsdt -\int_{Q_T} \p_s\sigma^\epsilon \eta^\epsilon \varphi \ dsdt\right).
\end{align*}
This together with the compactness results in (i) yields \eqref{eq:kappa_equ}.

We now provide the proof for \eqref{eq:sigma_equ}. By the definition of $F^\epsilon$ in \eqref{eq:F_eps},
\begin{align*}
|\p_s\eta^\epsilon|-1=|F^\epsilon(\kappa^\epsilon)|-1&=\epsilon|\kappa^\epsilon|+\frac{|\kappa^\epsilon|}{\sqrt{\epsilon+|\kappa^\epsilon|^2}}-1\\
&=\epsilon |\kappa^\epsilon|-\frac{\epsilon}{\sqrt{\epsilon+|\kappa^\epsilon|^2}\left(\sqrt{\epsilon+|\kappa^\epsilon|^2}+|\kappa^\epsilon|\right)}.
\end{align*}
Thus
\begin{equation*}
\begin{split}
|\kappa^\epsilon|\left||\p_s\eta^\epsilon|-1\right|
&=\epsilon\left||\kappa^\epsilon|^2-\frac{|\kappa^\epsilon|}{\sqrt{\epsilon+|\kappa^\epsilon|^2}\left(\sqrt{\epsilon+|\kappa^\epsilon|^2}+|\kappa^\epsilon|\right)}\right|\\
&\leq \epsilon|\kappa^\epsilon|^2 +\frac{\epsilon}{\sqrt{\epsilon+|\kappa^\epsilon|^2}} \leq  \epsilon|\kappa^\epsilon|^2+\sqrt{\epsilon}.
\end{split}
\end{equation*}
This together with Proposition~\ref{prop:max_uniform} yields
\begin{align*}
\int_{Q_T}|\kappa^\epsilon|\left||\p_s\eta^\epsilon|^2-1\right|&\leq (C(T)+1)\int_{Q_T}|\kappa^\epsilon|\left||\p_s\eta^\epsilon|-1\right|\\
&\leq (C(T)+1)\int_{Q_T}\epsilon|\kappa^\epsilon|^2+(C(T)+1)\sqrt{\epsilon}|Q_T|.
\end{align*}
Since $\kappa^\epsilon$ are uniformly bounded in $L^2(Q_T)$ by (i), passing to the limit $\epsilon\rightarrow 0$ we recover \eqref{eq:sigma_equ}.

Passing to the weak limit in $L^2(Q_T)$ in the equation $\p_t\eta^\epsilon=\p_s\kappa^\epsilon+g$ and using \eqref{e:kseta} we obtain $\p_t\eta = \p_s(\sigma\p_s\eta) + g$. To get $\sigma(|\p_s\eta|^2-1)=0$, it suffices to express $\kappa$ from the first equality in \eqref{e:kseta} and to substitute the result into the second one.

We now observe that by the Aubin-Lions-Simon theorem, \begin{equation}  \label{e:emb} L^\infty (0,T; W^{1,\infty}(\Omega))\cap H^1(0,T; L^2(\Omega)) \subset  C([0,T]; C(\overline\Omega)),\end{equation} and the embedding is compact. Without loss of generality, we may therefore assume that $\eta^\epsilon\to \eta$ strongly in $C([0,T]\times \overline\Omega)$. Hence, $\eta_0^\epsilon=\eta^\epsilon(0,\cdot)\to \eta(0,\cdot)$ uniformly in $s$, thus $\eta(0,\cdot)=\eta_0$. In a very similar way we obtain the required boundary condition at the fixed end.

To check the validity of the boundary condition for $\sigma$, we make the following trick. We swap the variables $t$ and $s$, noting that $\sigma^\epsilon$ are uniformly bounded and weakly converging in $H^1(0,1; L^2(0,T))$. Employing, for instance, \cite[Corollary 2.2.1]{ZV08}, we get \begin{equation}  \label{e:emb2} H^1(0,1; L^2(0,T))\subset C([0,1]; L^2(0,T)).\end{equation} Hence, by the Aubin-Lions-Simon theorem, the embedding \begin{equation*}   H^1(0,1; L^2(0,T))\subset C([0,1]; H^{-1}(0,T))\end{equation*} is compact, whence we may assume that $\sigma^\epsilon\to \sigma$ strongly in $C([0,1]; H^{-1}(0,T))$. Using \eqref{eq:bdry} and \eqref{eq:kappa_sigma}, we get $0=\sigma^\epsilon(\cdot,0)\to \sigma(\cdot,0)$ in $H^{-1}(0,T)$.  Consequently, $\sigma(t,0)=0$ in $L^2(0,T)$ and for a.e. $t$.
\end{proof}

\begin{prop}\label{prop:lim_eta}
Let $(\eta, \sigma)$ be a limiting solution obtained in Proposition \ref{prop:uniform}. Then
\begin{itemize}
\item[(i)] $|\p_s\eta(t,s)|\leq 1$ for a.e. $(t,s)\in Q_T$;
\item[(ii)] $\sigma(t,s)\geq 0$ for a.e. $(t,s)\in Q_T$.
\end{itemize}
\end{prop}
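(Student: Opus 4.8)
The plan is to establish both bounds by passing to the limit in corresponding \emph{approximate} properties of $\eta^\epsilon$ and $\sigma^\epsilon$, exploiting the explicit form of $F^\epsilon$ and $G^\epsilon$ together with the uniform estimates already secured in Propositions \ref{prop:energy}, \ref{prop:max_uniform}, and \ref{prop:uniform}.

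For part (i), the key observation is that $\p_s\eta^\epsilon = F^\epsilon(\kappa^\epsilon)$, and from the explicit formula \eqref{eq:F_eps} one reads off $|F^\epsilon(\kappa)| = \epsilon|\kappa| + |\kappa|/\sqrt{\epsilon+|\kappa|^2}$. The second summand is always strictly less than $1$, so the only way $|\p_s\eta^\epsilon|$ can exceed $1$ is through the term $\epsilon|\kappa^\epsilon|$. But by \eqref{eq:sigma_equ} (or equivalently the calculation preceding it) we already know $\epsilon|\kappa^\epsilon|^2$ is integrable and $\int_{Q_T}\epsilon|\kappa^\epsilon|^2\to 0$; a similar manipulation shows $\||\p_s\eta^\epsilon|-1\|_{+}$ is controlled by $\epsilon|\kappa^\epsilon|^2 + \sqrt\epsilon$ in $L^1(Q_T)$. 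Thus $(|\p_s\eta^\epsilon|-1)_+\to 0$ in $L^1(Q_T)$, and since $\p_s\eta^\epsilon\rightharpoonup \p_s\eta$ weakly-$*$ in $L^\infty(0,T;W^{1,\infty})$ with $|\p_s\eta^\epsilon|$ bounded (Proposition \ref{prop:max_uniform}), I would pass the constraint $|\p_s\eta|\le 1$ to the limit using weak lower semicontinuity of the convex functional $v\mapsto \int_{Q_T}(|v|-1)_+$. Concretely, lower semicontinuity gives $\int_{Q_T}(|\p_s\eta|-1)_+ \le \liminf \int_{Q_T}(|\p_s\eta^\epsilon|-1)_+ = 0$, forcing $|\p_s\eta|\le 1$ a.e.

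For part (ii), the natural route is to prove $\sigma^\epsilon \ge -c_\epsilon$ with $c_\epsilon\to 0$ and pass to the weak limit. Here $\sigma^\epsilon = G^\epsilon(\p_s\eta^\epsilon)\cdot\p_s\eta^\epsilon = \kappa^\epsilon\cdot F^\epsilon(\kappa^\epsilon)$. Using \eqref{eq:F_eps} directly, $\kappa^\epsilon\cdot F^\epsilon(\kappa^\epsilon) = \epsilon|\kappa^\epsilon|^2 + |\kappa^\epsilon|^2/\sqrt{\epsilon+|\kappa^\epsilon|^2}\ge 0$, so in fact $\sigma^\epsilon\ge 0$ pointwise for every $\epsilon$. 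Since $\sigma^\epsilon\rightharpoonup\sigma$ weakly in $L^2(0,T;H^1(\Omega))$, and nonnegativity of a sequence is preserved under weak $L^2$ limits (testing against nonnegative $L^2$ functions), we conclude $\sigma\ge 0$ a.e.\ in $Q_T$.

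The mildly delicate step is part (i): one must be careful that the weak-$*$ $L^\infty$ convergence of $\p_s\eta^\epsilon$ does not by itself yield pointwise control, so the argument genuinely needs the convexity/lower-semicontinuity of $(|\cdot|-1)_+$ combined with the strong $L^1$ decay of the positive part. In contrast, part (ii) is essentially immediate once the algebraic identity $\kappa\cdot F^\epsilon(\kappa)\ge 0$ is noted, and requires only the standard fact that the cone of nonnegative functions is weakly closed in $L^2$. I would therefore devote the bulk of the write-up to part (i), stating part (ii) tersely.
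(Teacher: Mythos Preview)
Your proposal is correct, and for part (ii) it is identical to the paper's argument: one simply notes $\sigma^\epsilon=\kappa^\epsilon\cdot F^\epsilon(\kappa^\epsilon)=\epsilon|\kappa^\epsilon|^2+|\kappa^\epsilon|^2/\sqrt{\epsilon+|\kappa^\epsilon|^2}\ge 0$ and passes to the weak limit.

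For part (i) your route via the convex functional $v\mapsto \int_{Q_T}(|v|-1)_+$ and its weak lower semicontinuity is valid, but the paper's version is a shade more direct. It writes
\[
\p_s\eta^\epsilon=\epsilon\kappa^\epsilon+\frac{\kappa^\epsilon}{\sqrt{\epsilon+|\kappa^\epsilon|^2}},
\]
observes that the second summand lies in the weakly closed convex set $B=\{\xi\in L^2(Q_T):|\xi|\le 1\ \text{a.e.}\}$ while the first summand tends to $0$ strongly in $L^2(Q_T)$ (because $\|\kappa^\epsilon\|_{L^2}$ is uniformly bounded), and concludes that the weak limit $\p_s\eta$ lies in $B$. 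Both arguments rest on the same decomposition and the same convexity fact---weak closedness of the unit ball is precisely what underlies the weak l.s.c.\ you invoke---so the difference is purely presentational: the paper avoids the detour through $(|\cdot|-1)_+$ and the reference back to the computations around \eqref{eq:sigma_equ}.
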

\begin{proof}
(i) Observe that the set $$B=\{\xi \in L^2(Q_T):|\xi(t,s)|\leq 1\ \textrm{for}\ \textrm{a.e.}\ (t,s)\in Q_T\}$$ is weakly closed in $L^2(Q_T)$. By the definition of $F^\epsilon$ in \eqref{eq:F_eps} we have that $$\epsilon\kappa^\epsilon+\frac{\kappa^\epsilon}{\sqrt{\epsilon+|\kappa^\epsilon|^2}}=\p_s\eta^\epsilon.$$ As $\epsilon\to 0$, the first term in the left-hand side goes to zero in $L^2(Q_T)$. Since the second term on the left belongs to $B$, so does the weak limit  $\p_s\eta$ of the right-hand side.

(ii) Recall the definition of $\sigma^\epsilon$ in \eqref{eq:kappa_sigma}: $\sigma^\epsilon=\kappa^\epsilon\cdot \p_s\eta^\epsilon=\kappa^\epsilon\cdot F^\epsilon(\kappa^\epsilon)$ for each $\epsilon>0$. Thus from the definition of $F^\epsilon$ we obtain $\sigma^\epsilon\geq 0$. This implies that in the limit $\sigma\geq 0$.
\end{proof}

\section{Limiting problem}\label{sec:limiting_prob}
\begin{defi}\label{defi:limit_sol}
Let $\Omega:=(0,1)$. Given an initial datum $\eta_0\in W^{1,\infty}(\Omega)^d$ with $\eta_0(1)=0$ and $|\p_s\eta_0(s)|\leq 1$ for almost every $s\in \Omega$, we call a pair $(\eta,\sigma)$ a generalized solution to \eqref{eq:whip}, \eqref{eq:bdry2} in $Q_\infty:=(0,\infty)\times \Omega$ if
\begin{itemize}
\item[(i)]  $\eta\in L^\infty_{loc}([0,\infty); W^{1,\infty}(\Omega))^d$, $\p_t\eta\in L^2_{loc}([0,\infty);L^2(\Omega))^d$, $\sigma\in L^2_{loc}([0,\infty); H^1(\Omega))$ and $\sigma\p_s\eta\in L^2_{loc}([0,\infty); H^1(\Omega))^d$.
\item[(ii)] The pair $(\eta, \sigma)$ satisfies
for a.e. $(t,s)\in Q_\infty$
\begin{align} \p_t\eta(t,s) = \p_s(\sigma(t,s) \p_s\eta(t,s)) &+ g,\\
\sigma(t,s) \left(|\p_s\eta(t,s)|^2 -1\right ) &=0,\label{eq:defii}\\
|\p_s\eta(t,s)|&\leq 1,\label{eq:defii2}
\end{align}
and the initial/boundary conditions
\begin{align*}
& \eta(t,1)=0\text{ for all } t,\  \eta(0,s)=\eta_0(s)  \text{ for all } s,\ \sigma(t,0)=0  \text{ for a.e. } t.
\end{align*}
\item[(iii)] The solution $\eta$ satisfies the energy dissipation inequality
\begin{equation} \label{e:edeq}
\int_\Omega |\p_t\eta(t,s)|^2 ds\leq \int_\Omega g\cdot \p_t\eta(t,s) ds
\end{equation} for a.e. $t\in (0,\infty)$.
\end{itemize}
\end{defi}
We point out that Remark \ref{rmk:ib} concerning the initial and boundary conditions applies to this definition.
\begin{rmk}\label{rmk:gene_sol}
It is not hard to see that if $(\eta,\sigma)$ is a $C^2$ regular solution pair of \eqref{eq:whip}, \eqref{eq:bdry2}, then it is also a generalized solution in the sense of Definition \ref{defi:limit_sol}; in particular, \eqref{eq:defii2}
 and \eqref{e:edeq} become strict equalities. On the other hand we claim that any  generalized solution $(\eta,\sigma)$ with $\eta\in C^1(\overline{Q_\infty})\cap C^2(Q_\infty)$ and $|\p_s\eta_0|=1$ is a solution to \eqref{eq:whip}, \eqref{eq:bdry2}. It is an \emph{open problem} whether there exist generalized solutions which violate the constraint $|\p_s\eta|=1$ on a subset of $Q_\infty$ of positive Lebesgue measure.

To prove the claim it suffices to show that the open set $U:=\{(t,s)\in Q_\infty: |\p_s\eta(t,s)|<1\}$ is empty. Suppose not, then $\sigma=0$ a.e. in $U$ due to the constraint $\sigma(|\p_s\eta|-1)=0$. This implies that $\p_t\eta=g$ hence $\p_{st}\eta=0$ in $U$. For each $(t_0,s_0)\in U$, let $t_1=\inf\{t\geq 0: (t,t_0)\times\{ s_0\}\subset U\}$.
If $t_1=0$ then
\begin{equation}\label{e:ca2}
|\p_s\eta(t_1,s_0)|=1
\end{equation}
due to our assumption about $\eta_0$, and if $t_1>0$ then \eqref{e:ca2} also holds by the continuity of $\p_s\eta$. From $\p_{st}\eta = 0$ in $U$ and the up to the boundary $C^1$ continuity of $\eta$, we deduce that \[|\p_s\eta(t_0,s_0)|= |\p_s\eta(t_1,s_0)|= 1,\] arriving at a contradiction.
\end{rmk}

The next theorem provides the global existence of generalized solutions.

\begin{thm}\label{thm:limit} For every $\eta_0\in W^{1,\infty}(\Omega)^d$ with $\eta_0(1)=0$ and $|\p_s\eta_0(s)|\leq 1$ for a.e. $s\in \Omega$, there exists a generalized solution to \eqref{eq:whip}, \eqref{eq:bdry2} in $Q_\infty$. Moreover, those solutions satisfy $\sigma(t,s)\geq 0$ for almost every $(t,s)\in Q_\infty$.
\end{thm}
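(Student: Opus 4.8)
The plan is to establish Theorem \ref{thm:limit} by assembling the approximation machinery developed in Section \ref{s:approximation}. The overall strategy is a compactness argument: we construct smooth solutions $\eta^\epsilon$ to the regularized problem \eqref{eq:approx_whip}-\eqref{eq:bdry}, extract a limit, and verify it satisfies all three clauses of Definition \ref{defi:limit_sol}. First, given the initial datum $\eta_0\in W^{1,\infty}(\Omega)^d$ with $\eta_0(1)=0$ and $|\p_s\eta_0|\leq 1$, I would invoke Remark \ref{rmk:initial} to choose approximations $\eta_0^\epsilon\in C^\infty_c(\Omega)$ converging to $\eta_0$ strongly in $L^2(\Omega)$, with $|\p_s\eta_0^\epsilon|\leq 1+\sqrt{\epsilon}$ so that $\|\eta_0^\epsilon\|_{L^2(\Omega)}$ and $\mathcal{E}^\epsilon(\eta_0^\epsilon)$ are uniformly bounded in $\epsilon$. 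By Amann's theory (as cited), each approximation problem admits a unique smooth solution $\eta^\epsilon$ on $Q_\infty$.

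Next I would apply Proposition \ref{prop:uniform}, whose hypotheses are exactly the uniform bounds just secured, on each finite time slab $Q_T$. This directly yields a limit pair $(\eta,\sigma)$ satisfying the PDE $\p_t\eta=\p_s(\sigma\p_s\eta)+g$ together with the relaxed constraint $\sigma(|\p_s\eta|^2-1)=0$ almost everywhere, the regularity in clause (i), and the initial/boundary conditions of clause (ii). Proposition \ref{prop:lim_eta} supplies the remaining pieces of clause (ii), namely $|\p_s\eta|\leq 1$, as well as the sign condition $\sigma\geq 0$ asserted in the theorem's conclusion. A standard diagonal extraction over an increasing sequence $T_n\to\infty$ upgrades these slab-by-slab limits to a single pair defined on all of $Q_\infty$ with the local integrability demanded by clause (i); I would note that the limit is consistent across slabs because uniqueness of the weak limit forces the restrictions to agree.

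The part that requires genuine work is clause (iii), the energy dissipation inequality \eqref{e:edeq}. For this I would return to the energy identity established inside the proof of Proposition \ref{prop:energy},
\begin{equation*}
\int_{Q_t}|\p_t\eta^\epsilon|^2\,dsdt+\mathcal{E}^\epsilon(\eta^\epsilon(t,\cdot))\leq \mathcal{E}^\epsilon(\eta_0^\epsilon),
\end{equation*}
and pass to the limit. The natural route is to differentiate the integrated form and recover the pointwise-in-time inequality $\int_\Omega|\p_t\eta|^2\,ds\leq\int_\Omega g\cdot\p_t\eta\,ds$. The main obstacle is lower semicontinuity of the dissipation term $\int|\p_t\eta^\epsilon|^2$ under weak $L^2$ convergence combined with controlling the energy term $\mathcal{E}^\epsilon$: one must argue that $\mathcal{E}^\epsilon(\eta^\epsilon(t,\cdot))$ converges (or is suitably semicontinuous) to the relaxed energy of the limit, which is delicate because $\widetilde{G}^\epsilon$ degenerates as $\epsilon\to 0$ and the constraint is only recovered weakly. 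I would handle this by testing the limit equation directly: taking the inner product of $\p_t\eta=\p_s(\sigma\p_s\eta)+g$ with $\p_t\eta$ and integrating by parts in space, using $\sigma(t,0)=0$ and $\p_t\eta(t,1)=0$, together with the constraint $\sigma(|\p_s\eta|^2-1)=0$ to eliminate the troublesome boundary and interior terms, should yield \eqref{e:edeq} as an almost-everywhere-in-$t$ statement for the limit itself, sidestepping the need for uniform convergence of $\mathcal{E}^\epsilon$.

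Finally I would remark that the resulting pair genuinely realizes the nonnegative Lagrange multiplier promised in the statement, and observe that combining this existence result with the stationary solution $\eta_{-\infty}$ discussed in the introduction already exhibits the non-uniqueness phenomenon, though that discussion belongs to Section \ref{s:backward}.
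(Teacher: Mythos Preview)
Your treatment of clauses (i) and (ii), the diagonal extraction over $T_n\to\infty$, and the sign $\sigma\geq 0$ via Proposition~\ref{prop:lim_eta} are all correct and match the paper.

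The gap is in clause (iii). Your proposed route---test the \emph{limit} equation with $\p_t\eta$ and integrate by parts in $s$---does not go through at the regularity available. After integrating by parts you would face the interior term $\int_\Omega \sigma\,\p_s\eta\cdot\p_{ts}\eta\,ds=\tfrac12\int_\Omega\sigma\,\p_t(|\p_s\eta|^2)\,ds$, but nothing in Definition~\ref{defi:limit_sol} gives $\p_{ts}\eta\in L^2$ (we only know $\p_t\eta\in L^2$ and $\sigma\p_s\eta\in L^2(H^1)$), so the pairing is not defined. Even granting a distributional meaning, the pointwise constraint $\sigma(|\p_s\eta|^2-1)=0$ does \emph{not} by itself imply $\sigma\,\p_t(|\p_s\eta|^2)=0$: the set $\{\sigma\neq 0\}$ need not be open and $|\p_s\eta|^2$ is merely $L^\infty$, so no chain-rule argument is available. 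In short, the very term you hope to ``eliminate'' cannot be written down.

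The paper avoids this by staying at the approximation level, where $\eta^\epsilon$ is smooth. Testing \eqref{eq:approx_whip} with $\p_t\eta^\epsilon\varphi(t)$, $\varphi\in C_c^\infty(0,\infty)$, produces the cross term $\int\kappa^\epsilon\cdot\p_{ts}\eta^\epsilon\,\varphi$, and the crucial observation (using the explicit form of $F^\epsilon$) is that
\[
\kappa^\epsilon\cdot\p_{ts}\eta^\epsilon=\epsilon\,\p_t\!\left(\tfrac{|\kappa^\epsilon|^2}{2}-\tfrac{1}{\sqrt{\epsilon+|\kappa^\epsilon|^2}}\right),
\]
an exact time derivative with an $\epsilon$ in front. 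Integrating by parts in $t$ and invoking the uniform $L^2$ bound on $\kappa^\epsilon$ kills this term as $\epsilon\to 0$. Weak lower semicontinuity of $\int|\p_t\eta^\epsilon|^2\varphi$ and weak convergence of $\int g\cdot\p_t\eta^\epsilon\varphi$ then deliver \eqref{e:edeq} in the limit. This computation is the missing ingredient in your plan.
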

\begin{proof}
Let $T_k$ be a sequence of time with $T_k\rightarrow \infty$ as $k\rightarrow \infty$. For each $T_k$ fixed let $\{(\eta^\epsilon_k, \sigma^\epsilon_k)\}_\epsilon$ be solutions to the approximation problems \eqref{eq:approx_whip}-\eqref{eq:bdry} in $[0,T_k)\times (0,1)$. Remember that by Remark \ref{rmk:initial} we could approximate the initial datum by smooth ones with uniformly bounded energies. By Proposition \ref{prop:uniform} and a standard diagonal argument one can obtain a subsequence $(\eta_j,\sigma_j):=(\eta^{\epsilon_{k_j}}_{k_j}, \sigma^{\epsilon_{k_j}}_{k_j})$ and $(\eta,\sigma)$, such that $\eta_j\rightarrow \eta$ weakly* in $L^\infty_{loc}([0,\infty);W^{1,\infty}(\Omega))$, strongly in $L^2_{loc}([0,\infty);L^2(\Omega))$, $\p_t\eta_j\rightarrow \p_t\eta$ weakly in $L^2_{loc}([0,\infty);L^2(\Omega))$ and $\sigma_j\rightarrow \sigma$ weakly in $L^2_{loc}([0,\infty);H^1(\Omega))$. Furthermore, by (ii) of Proposition \ref{prop:uniform} and by Proposition \ref{prop:lim_eta}, the limit $(\eta, \sigma)$ is a generalized solution in the sense of (ii) of Definition \ref{defi:limit_sol} with a.e. non-negative $\sigma$.

Now we show $(\eta,\sigma)$ satisfies (iii) of Definition \ref{defi:limit_sol}. Indeed, multiplying $\p_t\eta_j \varphi$, $\varphi=\varphi(t)\in C_c^\infty((0,\infty))$, to the both sides of the equation of $\eta_j$  gives
\begin{align*}
\int_{Q_\infty} |\p_t\eta_j|^2\varphi\ ds dt = \int_{Q_\infty} \p_sG_j(\p_s\eta_j)\p_t\eta_j \varphi\ dsdt+\int_{Q_\infty} g\cdot \p_t\eta_j \varphi\ dsdt.
\end{align*}
Here $G_j=G^{\epsilon_{k_j}}$. Thus we only need to show that $\int_{Q_\infty} \p_sG_j(\p_s\eta_j)\cdot \p_t\eta_j \varphi \ dsdt\rightarrow 0$ as $j\rightarrow \infty$. After one integration by parts in the space variable (and recalling $\kappa_j=G_j(\p_s\eta_j)$) it suffices to show that
\begin{equation}\label{eq:ortho}
\int_{Q_\infty} \kappa_j\cdot \p_{ts}\eta_j\varphi\ dsdt\rightarrow 0.
\end{equation}
For this using $\p_s\eta_j= \epsilon_{k_j} \kappa_j+\frac{\kappa_j}{\sqrt{\epsilon_{k_j}+|\kappa_j|^2}}$ we have
\begin{equation}\label{eq:G}
\begin{split}
\kappa_j\cdot \p_{st}\eta_j&=\kappa_j\cdot \left(\epsilon_{k_j} \kappa_j+\frac{\kappa_j}{\sqrt{\epsilon_{k_j}+|\kappa_j|^2}}\right)_t\\
&=\epsilon_{k_j} \kappa_j \p_t\kappa_j +\kappa_j\cdot\left(\frac{\p_t\kappa_j}{\sqrt{\epsilon_{k_j}+|\kappa_j|^2}}-\frac{\kappa_j(\kappa_j\cdot \p_t\kappa_j)}{(\sqrt{\epsilon_{k_j}+|\kappa_j|^2})^3}\right)\\
&=\epsilon_{k_j} \kappa_j \p_t\kappa_j +\epsilon_{k_j} \frac{\kappa_j \cdot \p_t\kappa_j}{(\sqrt{\epsilon_{k_j}+|\kappa_j|^2})^3}\\
&=\epsilon_{k_j} \frac{d}{dt}\left(\frac{|\kappa_j|^2}{2}-\frac{1}{\sqrt{\epsilon_{k_j}+|\kappa_j|^2}}\right).
\end{split}
\end{equation}
Thus
\begin{align*}
\int_{Q_\infty} \kappa_j\cdot \p_{ts}\eta_j\varphi\ dsdt=-\epsilon_{k_j}\int_{Q_\infty} \left(\frac{|\kappa_j|^2}{2}-\frac{1}{\sqrt{\epsilon_{k_j}+|\kappa_j|^2}}\right) \frac{d}{dt}\varphi \ dsdt.
\end{align*}
From (i) in Proposition~\ref{prop:uniform}, $|\kappa_j|$ is uniformly bounded in $L^2(0,T; H^1(\Omega))$ where $T$ is such that $\varphi$ vanishes outside of $(0,T)$. Passing to the limit $j\rightarrow \infty$ we obtain \eqref{eq:ortho}.
\end{proof}

\section{Exponential decay}
\label{s:exp}
In this section we show that the relative potential energy decays along the trajectories of the generalized solutions exponentially fast.

We start by recalling the potential energy
$$E(t):=E(\eta(t,\cdot))=\int_0^1 (-g)\cdot \eta(t,s) ds.$$
We note that given a generalized solution the associated energy may not decrease along the trajectories. Indeed, let $(\eta,\sigma)$ be a generalized solution in the sense of Definition \ref{defi:limit_sol}. Then for $0\leq t_1<t_2<\infty$,
\begin{align*}
E(t_2)-E(t_1)&=\int_{t_1}^{t_2}\int_0^1(-g)\cdot \p_t \eta(t,s) dsdt\\
&=\int_{t_1}^{t_2}\int_0^1(-g)\cdot \p_s(\sigma\p_s\eta) +(-g)\cdot g dsdt\\
&=\int_{t_1}^{t_2}\left(\sigma(t,1)(-g)\cdot \p_s\eta(t,1) -1\right) dt.
\end{align*}
Since for generalized solutions we do not have any control on $\sigma(t,1)$, it is not clear whether the integral is nonpositive or not. On the other side, as already seen in Section \ref{s:gf}, if $(\eta,\sigma)$ is $C^2$-regular, using the equation of $\sigma$ it is possible to show that $\sigma(t,1)\leq 1$ for all $t$, which implies $E(t_2)\leq E(t_1)$.\\

For any generalized solution $(\eta, \sigma)$ with $\sigma\geq 0$ almost everywhere, we will show that the relative energy $\tilde{E}(t):=E(t)-E(\eta_\infty)$, where $(\eta_\infty, \sigma_\infty)(s)=((1-s)g, s)$ is the downwards vertical stationary solution, has an  upper bound which decays exponentially fast to zero as $t\rightarrow \infty$. This together with the nonnegativity of the relative energy (cf. Remark \ref{rmk:nonnegative}) implies the convergence of $\eta(t,\cdot)$ to $\eta_\infty$ in $L^2(\Omega)$ with an exponential convergence rate (cf. Corollary \ref{cor:conv_eta}).

Before we state our exponential convergence theorem we remark that using an integration by parts one can rewrite the energy as
$$E(t)=\int_0^1 s g\cdot \p_s\eta(t,s) ds.$$
A direct computation gives that $E(\eta_\infty)\equiv -\frac{1}{2}$.
The main result of this section is the following:
\begin{thm}\label{thm:exp}
Let $(\eta, \sigma)$ be a generalized solution in the sense of Definition \ref{defi:limit_sol}. Let $\tilde{E}(t):=E(t)-E(\eta_\infty)$ denote the relative energy. Assume that $\sigma\geq 0$ almost everywhere in $Q_\infty$. Then there exists a universal constant $c_0>0$ such that
\begin{align}\label{eq:exp}
\tilde{E}(t)\leq e^{-c_0t} \tilde{E}(0),\quad t\in [0,\infty).
\end{align}
\end{thm}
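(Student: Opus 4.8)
The plan is to prove the functional inequality $\tilde{E}(t)\le 2\int_\Omega|\p_t\eta|^2\,ds$ between the relative energy and its dissipation, and then to conclude by Gronwall's lemma. Since the Bakry-Emery strategy is unavailable (there is no $\lambda$-convexity, cf. Section \ref{s:gf}), the crux is to produce this inequality directly, and the mechanism I would use is a Hardy inequality applied to a well-chosen auxiliary vector field together with the sign condition $\sigma\ge0$.

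First I would rewrite the relative energy in a differentiated form. Using $\eta(t,1)=0$ and integrating by parts, $E(t)=\int_0^1 s\,g\cdot\p_s\eta\,ds$, and since $E(\eta_\infty)=-\tfrac12=-\int_0^1 s\,ds$ we obtain
\[ \tilde{E}(t)=\int_0^1 s\,f(t,s)\,ds,\qquad f:=1+g\cdot\p_s\eta. \]
Because $|\p_s\eta|\le1$ and $|g|=1$, the scalar $f$ takes values in $[0,2]$; in particular $\tilde{E}\ge0$. Next I introduce the field $\psi:=\sigma\,\p_s\eta+s\,g$. It enjoys two exact identities: the boundary condition $\sigma(t,0)=0$ gives $\psi(t,0)=0$, while the equation $\p_t\eta=\p_s(\sigma\p_s\eta)+g$ gives $\p_s\psi=\p_t\eta$. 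Consequently, applying the classical Hardy inequality componentwise,
\[ \int_0^1\frac{|\psi|^2}{s}\,ds\le\int_0^1|\p_s\psi|^2\,ds=\int_\Omega|\p_t\eta|^2\,ds. \]
This is precisely where the Hardy-type inequality enters, and the degenerate boundary condition $\sigma(t,0)=0$ is what makes it applicable.

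The heart of the argument is the pointwise lower bound $|\psi|^2\ge\tfrac12 s^2 f$. Writing $x:=\sigma$ and using $\sigma^2|\p_s\eta|^2=\sigma^2$ (from the relaxed constraint $\sigma(|\p_s\eta|^2-1)=0$) together with $\sigma\,(g\cdot\p_s\eta)=\sigma(f-1)$, a direct expansion gives
\[ |\psi|^2=x^2+2xs(f-1)+s^2=(x+s(f-1))^2+s^2 f(2-f). \]
When $f\le1$ the last term already satisfies $s^2 f(2-f)\ge s^2 f\ge\tfrac12 s^2 f$; when $f\ge1$, the hypothesis $\sigma\ge0$ ensures $(x+s(f-1))^2\ge s^2(f-1)^2$, so that $|\psi|^2\ge s^2\big[(f-1)^2+f(2-f)\big]=s^2\ge\tfrac12 s^2 f$. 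Dividing by $s$ and integrating yields $\tfrac12\tilde{E}(t)\le\int_0^1 s^{-1}|\psi|^2\,ds\le\int_\Omega|\p_t\eta|^2\,ds$. I expect this algebraic estimate to be the main obstacle: it is exactly here that $\sigma\ge0$ is indispensable, since for a sign-changing multiplier the bound collapses as $f\to2$ (the upward branch does not relax to $\eta_\infty$).

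Finally, the energy dissipation inequality (iii) of Definition \ref{defi:limit_sol} gives $\int_\Omega|\p_t\eta|^2\,ds\le\int_\Omega g\cdot\p_t\eta\,ds=-\tilde{E}'(t)$ for a.e. $t$, the last equality coming from differentiating $E(t)=\int_\Omega(-g)\cdot\eta\,ds$. Combining this with the previous step I obtain the differential inequality $\tilde{E}'(t)\le-\tfrac12\tilde{E}(t)$ a.e.; since $\p_t\eta\in L^2_{loc}$ renders $t\mapsto\tilde{E}(t)$ locally absolutely continuous and $\tilde{E}\ge0$, Gronwall's lemma furnishes $\tilde{E}(t)\le e^{-t/2}\tilde{E}(0)$, i.e. \eqref{eq:exp} with the universal constant $c_0=\tfrac12$.
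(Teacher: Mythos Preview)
Your proof is correct and shares the same architecture as the paper's: both apply a Hardy-type inequality to the field $\psi=\sigma\p_s\eta+sg=\kappa-\kappa_\infty$ (using $\psi(t,0)=0$ and $\p_s\psi=\p_t\eta$) to control $\int_0^1 s^{-1}|\psi|^2\,ds$ by the dissipation $\int_0^1|\p_t\eta|^2\,ds$, then bound the relative energy by $\int_0^1 s^{-1}|\psi|^2\,ds$, and finally combine with the energy dissipation inequality (iii) and Gronwall.

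The substantive difference lies in the middle step. The paper obtains $\tilde E(t)\lesssim\int_0^1 s^{-1}|\psi|^2\,ds$ by first rewriting $\tilde E(t)=\frac12\int_0^1 s|\p_s(\eta-\eta_\infty)|^2\,ds-\frac12\int_0^1 s(|\p_s\eta|^2-1)\,ds$ and then performing a measurable decomposition of $Q_\infty$ into the sets $\{|\p_s\eta|=1\}$, $\{|\p_s\eta|\neq1,\sigma=0\}$, $\{|\p_s\eta|\neq1,\sigma\neq0\}$; on the first set it uses the triangle inequality $|s-\sigma|\le|\kappa-\kappa_\infty|$ (this is where $\sigma\ge0$ enters), and the second set is controlled directly by observing that $\psi=sg$ there. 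Your route avoids the decomposition entirely: using $\sigma^2|\p_s\eta|^2=\sigma^2$ (an immediate consequence of the relaxed constraint) you obtain the exact identity $|\psi|^2=(\sigma+s(f-1))^2+s^2f(2-f)$ with $f=1+g\cdot\p_s\eta\in[0,2]$, and then a short case analysis in $f$ yields the pointwise bound $|\psi|^2\ge\frac12 s^2f$, where $\sigma\ge0$ is used only in the case $f\ge1$. This is more direct and gives an explicit constant $c_0=\tfrac12$, which is sharper than the paper's $c_0=\bar C/4$ (the paper loses an extra factor because it invokes the Hardy bound twice, once for the $\Omega_1$ contribution and once for $\Omega_{2,1}$). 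A minor remark: the inequality $\int_0^1|\psi|^2/s\,ds\le\int_0^1|\p_s\psi|^2\,ds$ you call ``classical Hardy'' is not the textbook Hardy inequality (which has weight $s^{-2}$), but it follows in one line from Cauchy--Schwarz and Fubini, so there is no issue.
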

\begin{proof}
Using that
$\p_s\eta_\infty=-g$, one can rewrite $E(t)=-\int_0^1s\p_s\eta_\infty\cdot \p_s\eta ds$.
Thus
\begin{align*}
\tilde{E}(t)&=-\int_0^1 s\p_s\eta_\infty \cdot \p_s(\eta-\eta_\infty) ds\\
&=\int_0^1s|\p_s(\eta-\eta_\infty)|^2 ds-\int_0^1s\p_s \eta \cdot \p_s(\eta-\eta_\infty)ds
\end{align*}
Noting that
\begin{align*}
\int_0^1s\p_s \eta \cdot \p_s(\eta-\eta_\infty)=\int_0^1s\left(|\p_s\eta|^2-1\right)ds +\tilde{E}(t),
\end{align*}
we obtain an equivalent expression of the relative energy
\begin{align}\label{eq:energy}
\tilde{E}(t)=\frac{1}{2}\int_0^1s|\p_s(\eta-\eta_\infty)|^2 ds-\frac{1}{2}\int_0^1s\left(|\p_s\eta|^2-1\right)ds.
\end{align}
\emph{Claim:} There exists a universal $\bar{c_0}>0$ such that
\begin{equation}\label{eq:exp_decay}
\tilde{E}(t)\leq \bar{c_0} \int_{0}^1|\p_t\eta(t,s)|^2 ds
\end{equation}
in the sense of distributions.\\
\emph{Proof of the claim:} We first prove a lower bound for $\int_0^1|\p_t\eta|^2\ ds$. Indeed, using the equation of $\eta$ and Hardy's inequality one has for a.e. $t\in (0,\infty)$,
\begin{equation}\label{eq:eta_t}
\begin{split}
\int_0^1|\p_t\eta|^2 ds &= \int_0^1|\p_s\kappa+g|^2 ds= \int_0^1|\p_s\kappa-\p_s\kappa_\infty|^2 ds\\
&\geq \bar C \int_0^1 s^{-1}|\kappa-\kappa_\infty|^2 ds=\bar C\int_0^1s^{-1}|\sigma\p_s\eta- s\p_s\eta_\infty|^2 ds.
\end{split}
\end{equation}
Here $\bar C$ is a universal constant which is independent of $t$, and $\kappa_\infty:=\sigma_\infty\p_s \eta_\infty=-gs$. This implies that for any $\phi=\phi(t)\in C^\infty_c((0,\infty))$, $\phi\geq 0$,
\begin{equation}\label{eq:eta_t22}
\int_{Q_\infty} |\p_t\eta|^2\phi\ dsdt\geq \bar C\int_{Q_\infty} s^{-1}|\sigma\p_s\eta- s\p_s\eta_\infty|^2 \phi\ dsdt.
\end{equation}
We take the precise representatives of $\p_s\eta$ and $\sigma$ (cf. \cite[Section 1.7.1]{EG}) and define (still use the same notation for the representatives)
\begin{equation}\label{eq:Omega}
\begin{split}
\Omega_1&:=\{(t,s)\in Q_\infty: |\p_s\eta(t,s)|=1\},\\
\Omega_{2,1}&:=\{(t,s)\in Q_\infty: |\p_s\eta(t,s)|\neq 1, \ \sigma(t,s)=0\},\\
\Omega_{2,2}&:=\{(t,s)\in Q_\infty: |\p_s\eta(t,s)|\neq 1, \ \sigma(t,s)\neq 0\}.
\end{split}
\end{equation}
Using that $\sigma=0$ in $\Omega_{2,1}$ we furthermore obtain from \eqref{eq:eta_t22} that
\begin{equation}\label{eq:eta_t2}
\int_{Q_\infty} |\p_t\eta|^2\phi(t)\ dsdt\geq \bar C\int_{\Omega_{2,1}} s\phi(t)\ dsdt.
\end{equation}

Next we prove an upper bound for the relative energy $\tilde{E}(t)$. By \eqref{eq:energy},
\begin{align*}
\int\tilde{E}(t)\phi(t) dt&=\frac{1}{2}\int_{\Omega_1\cup\Omega_{2,1}} s|\p_s(\eta-\eta_\infty)|^2\phi\ ds dt+\frac{1}{2}\int_{\Omega_{2,2}} s|\p_s(\eta-\eta_\infty)|^2\phi\ ds dt\\
&-\frac{1}{2}\int_{\Omega_{2,1}} s\left(|\p_s\eta|^2-1\right)\phi\ dsdt-\frac{1}{2}\int_{\Omega_{2,2}} s\left(|\p_s\eta|^2-1\right)\phi\ dsdt
\end{align*}
Note that by \eqref{eq:defii} in Definition \ref{defi:limit_sol} we have
$\left|\Omega_{2,2}\right| =0$.
This together with $\eta\in L^\infty_{loc}([0,\infty);W^{1,\infty}(\Omega))^d$ (cf. (i) of Definition \ref{defi:limit_sol}) yields that the integrals over $\Omega_{2,2}$ are zero, i.e.
\begin{align*}
\int_{\Omega_{2,2}} s|\p_s(\eta-\eta_\infty)|^2\phi\ ds dt, \ \int_{\Omega_{2,2}} s\left(|\p_s\eta|^2-1\right)\phi\ dsdt=0.
\end{align*}
To estimate the integrals over  $\Omega_{2,1}$, we note that by \eqref{eq:defii2} of Definition \ref{defi:limit_sol},\\
 $\left|\Omega_{2,1}\cap \{(t,s)\in Q_T:|\p_s\eta(t,s)|>1\}\right| =0$. This gives
\begin{align*}
&\frac{1}{2}\int_{\Omega_{2,1}} s|\p_s(\eta-\eta_\infty)|^2\phi\ ds dt-\frac{1}{2}\int_{\Omega_{2,1}} s\left(|\p_s\eta|^2-1\right)\phi\ dsdt\\
= &\int_{\Omega_{2,1}} s\left(1-\p_s\eta\cdot \p_s\eta_\infty\right)\phi\ ds dt\\
\leq &\int_{\Omega_{2,1}}2 s\phi\ dsdt.
\end{align*}
For the integral over $\Omega_1$ we use Cauchy-Schwartz to obtain
\begin{align*}
&\quad \frac{1}{2}\int_{\Omega_{1}} s|\p_s(\eta-\eta_\infty)|^2\phi\ dsdt=\frac{1}{2}\int_{\Omega_1} s^{-1}|\sigma\p_s\eta-s\p_s\eta_\infty+(s-\sigma)\p_s\eta|^2\phi\ dsdt\\
&\leq\int_{Q_\infty} s^{-1}|\sigma\p_s\eta-s\p_s\eta_\infty|^2 \phi\ dsdt+\int_{\Omega_1} s^{-1}|s-\sigma|^2\phi\ dsdt.
\end{align*}
Since $|\p_s\eta|=1$ almost everywhere in $\Omega_1$, then by the triangle inequality and $\sigma\geq 0$ we have $|s-\sigma|=\left||s\p_s\eta_\infty|-|\sigma\p_s\eta|\right|\leq \left|\sigma\p_s\eta-s\p_s\eta_\infty\right|$ almost everywhere in $\Omega_1$. Thus
\begin{align*}
\int_{\Omega_1} s^{-1}|s-\sigma|^2\phi\ dsdt\leq \int_{\Omega_1} s^{-1}|\sigma\p_s\eta-s\p_s\eta_\infty|^2 \phi\ dsdt.
\end{align*}
Therefore combining all together we have, for all $\phi=\phi(t)\geq 0$ and $\phi\in C^\infty_c((0,\infty))$
\begin{align*}
\int_0^\infty \tilde{E}(t)\phi(t)\ dt\leq 2\int_{Q_\infty} s^{-1}|\sigma\p_s\eta-s\p_s\eta_\infty|^2\phi(t) \ dsdt+\int_{\Omega_{2,1}} 2s\phi(t)\ dsdt.
\end{align*}
Recalling the lower bound in \eqref{eq:eta_t22} and \eqref{eq:eta_t2} we thus obtain a universal $\bar{c_0}=4/\bar C>0$ such that
\begin{align*}
\int_0^\infty \tilde{E}(t)\phi(t)\ dt\leq \bar{c_0} \int_{Q_\infty}|\p_t\eta|^2\phi(t)\ dsdt.
\end{align*}
This completes the proof of the claim.

Now we use (iii) in Definition \ref{defi:limit_sol} to obtain the exponential decay of the relative energy. Indeed, given $(\eta, \sigma)$ a solution pair in the sense of Definition \ref{defi:limit_sol}, \eqref{eq:exp_decay}, (iii) of Definition \ref{defi:limit_sol} and an integration by parts yield
\begin{align*}
\int_0^\infty \tilde{E}(t)\phi(t)\ dt&\leq \bar{c_0}\int_0^\infty \int_0^1 g\cdot \p_t\eta (t,s)\phi(t)\ dsdt\\
&=\bar{c_0}\int_0^\infty \tilde{E}(t)\frac{d}{dt}\phi(t)\ dt.
\end{align*}
Denoting $c_0=\bar{c_0}^{-1}=\bar C/4$ we have
\begin{align*}
\int_0^\infty \tilde{E}(t)e^{c_0t}\frac{d}{dt} (e^{-c_0t}\phi(t))  dt\geq 0,
\end{align*}
for all $\phi=\phi(t)\geq 0$, $\phi\in C^{\infty}_c((0,\infty))$. This implies that $d(e^{c_0t}\tilde{E}(t))/dt$ is a nonpositive distribution. Since $t\mapsto \tilde{E}(t)$ is continuous, we have $\tilde{E}(t)\leq e^{-c_0t}\tilde{E}(0)$ for all $t\in [0,\infty)$.
\end{proof}

\begin{rmk}\label{rmk:nonnegative}
From the equivalent expression of the relative energy $\tilde{E}(t)$ in \eqref{eq:energy},  $|\p_s\eta|\leq 1$ a.e. (cf. \eqref{eq:defii2} in Definition \ref{defi:limit_sol}) and the continuity of $t\mapsto \tilde{E}(t)$, we immediately obtain that $\tilde{E}(t)\geq 0$ for all $t\in [0,\infty)$.

We also stress that the assumption $\sigma\geq 0$ a.e. in $Q_\infty$ in Theorem \ref{thm:exp} is not restrictive. On one hand, it is satisfied by the generalized solutions existing by Theorem \ref{thm:limit} for all Lipschitz initial data with $\eta_0(1)=0$ and $|\p_s\eta_0(s)|\leq 1$ for a.e. $s\in \Omega$. On the other hand, it automatically holds for any $C^2$-smooth solution $(\eta,\sigma)$ with $\alpha(\eta_0)\leq \pi/2$ (cf. Section \ref{s:gf}).
\end{rmk}

The exponential decay of the relative energy implies the exponential decay of the generalized solution to the stationary solution $\eta_\infty$ in $L^2(\Omega)$ as $t\rightarrow \infty$.

\begin{cor}\label{cor:conv_eta}
Let $(\eta,\sigma)$ be a generalized solution in the sense of Definition \ref{defi:limit_sol} with $\sigma\geq 0$. Let $\tilde{E}(t)=E(\eta(t,\cdot))-E(\eta_\infty)$ be the relative energy as in Theorem \ref{thm:exp}, where $(\eta_\infty,\sigma_\infty)=((1-s)g,s)$ is the stable stationary solution. Then there exist a universal constant $c_0>0$ such that for all $t\in [0,\infty)$,
\begin{align}
\left\|\eta(t,\cdot)-\eta_\infty(\cdot)\right\|^2_{L^2(\Omega)}&\leq (2c_0)^{-1}\tilde{E}(0)e^{-c_0t},\label{eq:exp_eta}\\
\int_t^\infty \left\|s^{-1/2}\left(\sigma(\tilde{t},\cdot)-\sigma_\infty(\cdot)\right)\right\|_{L^2(\Omega)}^2 \ d\tilde{t} &\leq (2c_0)^{-3/2} \tilde{E}(0)^{1/2}e^{-c_0 t/2}. \label{eq:exp_sigma}
\end{align}
\end{cor}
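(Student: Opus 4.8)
The plan is to deduce both estimates from the exponential decay of the relative energy established in Theorem~\ref{thm:exp}, namely $\tilde E(t)\le e^{-c_0t}\tilde E(0)$, by controlling each left-hand side either by $\tilde E(t)$ itself (for \eqref{eq:exp_eta}) or by the time-integrated energy dissipation (for \eqref{eq:exp_sigma}). The two coercivity inequalities needed are both of Hardy type, and one of them has in fact already been isolated inside the proof of Theorem~\ref{thm:exp}.

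For \eqref{eq:exp_eta}, I would set $w:=\eta-\eta_\infty$ and note $w(t,1)=\eta(t,1)-\eta_\infty(1)=0$. Starting from the identity \eqref{eq:energy} and discarding the manifestly nonnegative term $-\tfrac12\int_0^1 s(|\p_s\eta|^2-1)\,ds\ge0$ (nonnegative by the constraint \eqref{eq:defii2}), I obtain the one-sided bound
\[
\tilde E(t)\ \ge\ \tfrac12\int_0^1 s\,|\p_s w(t,s)|^2\,ds .
\]
Then I would prove a weighted Poincar\'e inequality for maps vanishing at $s=1$: writing $w(s)=-\int_s^1\p_r w\,dr$ and applying Cauchy--Schwarz with the weights $r^{-1/2},r^{1/2}$ gives $|w(s)|^2\le(-\log s)\int_0^1 r|\p_r w|^2\,dr$, and integrating in $s$ (using $\int_0^1(-\log s)\,ds=1$) yields $\|w(t,\cdot)\|_{L^2(\Omega)}^2\le\int_0^1 s|\p_s w|^2\,ds$. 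Combining the last two displays with Theorem~\ref{thm:exp}, and recalling $c_0=\bar C/4$, produces the claimed exponential control of $\|\eta-\eta_\infty\|_{L^2}^2$ by $(4c_0)^{-1}\tilde E(0)e^{-c_0t}$.

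For \eqref{eq:exp_sigma}, the key is the pointwise comparison already contained in the proof of Theorem~\ref{thm:exp}: on $\Omega_1$ the triangle inequality together with $\sigma\ge0$ and $|\p_s\eta|=1$ gives $|s-\sigma|=\big||s\p_s\eta_\infty|-|\sigma\p_s\eta|\big|\le|\sigma\p_s\eta-s\p_s\eta_\infty|$, on $\Omega_{2,1}$ (where $\sigma=0$) both quantities equal $s$, and $|\Omega_{2,2}|=0$; hence $s^{-1}|\sigma-\sigma_\infty|^2\le s^{-1}|\kappa-\kappa_\infty|^2$ a.e.\ in $Q_\infty$, with $\kappa_\infty=-gs$. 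Feeding this into the Hardy inequality \eqref{eq:eta_t} gives, for a.e.\ $t$,
\[
\big\|s^{-1/2}\big(\sigma(t,\cdot)-\sigma_\infty\big)\big\|_{L^2(\Omega)}^2\ \le\ \bar C^{-1}\int_0^1|\p_t\eta(t,s)|^2\,ds .
\]
It then remains to integrate in time over $[t,\infty)$. Here I would invoke the energy-dissipation inequality (iii) of Definition~\ref{defi:limit_sol}, which, since $\int_0^1 g\cdot\p_t\eta=-\tilde E'$ and $\tilde E$ is absolutely continuous (because $\p_t\eta\in L^2_{loc}$), reads $\int_0^1|\p_t\eta|^2\,ds\le-\tilde E'(t)$; integrating and using $\tilde E\ge0$ together with $\tilde E(t)\to0$ (from Theorem~\ref{thm:exp}) gives $\int_t^\infty\!\int_0^1|\p_t\eta|^2\,ds\,d\tilde t\le\tilde E(t)\le e^{-c_0t}\tilde E(0)$, and hence the desired exponential decay of the time-integral of the weighted $\sigma$-defect.

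The main obstacle is the passage from the a.e.-in-time dissipation inequality to the telescoped bound $\int_t^\infty\!\int_0^1|\p_t\eta|^2\le\tilde E(t)$: this requires that $t\mapsto\tilde E(t)$ be absolutely continuous with $\tilde E'=\int_0^1(-g)\cdot\p_t\eta$ a.e., which follows from $\p_t\eta\in L^2_{loc}([0,\infty);L^2(\Omega))$, and that $\tilde E(t)\to0$, which is exactly the content of Theorem~\ref{thm:exp}. The remaining ingredients---the weighted Poincar\'e inequality and the pointwise $\sigma$-versus-$\kappa$ comparison---are elementary, the latter being already established in the proof of the exponential-decay theorem; only the bookkeeping of the universal constants (how $\bar C$ and $c_0=\bar C/4$ enter, together with the elementary step $\tilde E(t)\le\tilde E(0)^{1/2}\tilde E(t)^{1/2}$ when one wishes to rewrite the prefactor) needs care to land on the stated form of the estimates.
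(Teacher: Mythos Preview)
Your proposal is correct and mirrors the paper's proof closely. For \eqref{eq:exp_eta} both arguments drop the nonnegative term in \eqref{eq:energy} and apply a weighted Poincar\'e/Hardy inequality for functions vanishing at $s=1$; the paper simply cites ``Hardy's inequality'' with constant $\bar C=4c_0$, while you supply an elementary proof via the $-\log s$ bound (yielding constant $1$, hence a prefactor $2$ rather than $(4c_0)^{-1}$, which is cosmetic). For \eqref{eq:exp_sigma} there is one genuine difference in the final step: having telescoped the dissipation to $\int_t^\infty\!\int_\Omega|\p_t\eta|^2\le\int_\Omega g\cdot(\eta_\infty-\eta(t,\cdot))$, the paper bounds the right-hand side by Cauchy--Schwarz as $\|\eta(t,\cdot)-\eta_\infty\|_{L^2}$ and then invokes the already-proven \eqref{eq:exp_eta}, which is exactly what produces the stated form $(4c_0)^{-3/2}\tilde E(0)^{1/2}e^{-c_0t/2}$. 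You instead recognize that $\int_\Omega g\cdot(\eta_\infty-\eta(t,\cdot))=\tilde E(t)$ and bound it directly by $e^{-c_0t}\tilde E(0)$ via Theorem~\ref{thm:exp}; this is shorter and gives a sharper exponent, but yields $(4c_0)^{-1}\tilde E(0)e^{-c_0t}$ rather than the stated prefactor, and your suggested rewriting $\tilde E(t)\le\tilde E(0)^{1/2}\tilde E(t)^{1/2}$ does not convert one into the other. The pointwise comparison $s^{-1}|\sigma-\sigma_\infty|^2\le s^{-1}|\kappa-\kappa_\infty|^2$ via the decomposition into $\Omega_1$, $\Omega_{2,1}$, $\Omega_{2,2}$ is handled identically in both proofs.
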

\begin{proof}
\emph{Proof of \eqref{eq:exp_eta}:}
By \eqref{eq:exp} for any nonnegative $\varphi=\varphi(t)\in C^\infty_c((0,\infty))$,
\begin{align*}
\int_0^\infty\tilde{E}(t)\varphi(t) dt\leq \int_0^\infty e^{-c_0t}\tilde{E}(0) \varphi(t) dt .
\end{align*}
Using the equivalent expression \eqref{eq:energy} for $\tilde{E}(t)$ we obtain
\begin{align*}
\int_{Q_\infty}s|\p_s(\eta-\eta_\infty)|^2 \varphi\ dsdt-\int_{Q_\infty} s\left(|\p_s\eta|^2-1\right)\varphi\ dsdt\leq  2 \int_0^\infty e^{-c_0t}\tilde{E}(0) \varphi\ dt.
\end{align*}
Since $\p_s\eta\in L^2_{loc}(Q_\infty)$ and $|\p_s\eta|\leq 1$ for almost every $(t,s)\in Q_\infty$ (cf. Definition \ref{defi:limit_sol}), we have
$
-\int_{Q_\infty}s\left(|\p_s\eta|^2-1\right)\varphi\ dsdt\geq 0.
$
Thus we immediately obtain the exponential decay of the Sobolev distance:
\begin{equation}\label{eq:sob}
\left\|\sqrt{s}(\p_s\eta-\p_s\eta_\infty)\right\|_{L^2(\Omega)}^2\leq 2\tilde{E}(0)e^{-c_0t} \text{ for almost every }t\in [0,\infty).
\end{equation}
To derive the decay for the $L^2$ distance we apply again the Hardy's inequality
\begin{align*}
\bar C\int_0^1\left|\eta(t,s)-\eta_\infty(s)\right|^2 ds\leq \int_0^1s|\p_s(\eta(t,s)-\eta_\infty(s))|^2 ds
\end{align*}
for almost every $t$. Here $\bar C=4c_0$ is the same as in the proof for Theorem \ref{thm:exp}. This together with \eqref{eq:sob} yields
\begin{align*}
\bar C\int_{Q_\infty}\left|\eta(t,s)-\eta_\infty(s)\right|^2 \varphi(t) \ dsdt\leq \int_0^\infty e^{-c_0t}\tilde{E}(0) \varphi(t)\ dt.
\end{align*}
Since this holds for arbitrary nonnegative $\varphi\in C_c^\infty((0,\infty))$ and since the map $t\mapsto \|\eta(t,\cdot)-\eta_\infty(\cdot)\|_{L^2(\Omega)}$ is continuous by virtue of \eqref{e:emb}, we conclude that
\begin{align*}
\bar C\|\eta(t,\cdot)-\eta_\infty(\cdot)\|^2_{L^2(\Omega)}\leq  2\tilde{E}(0)e^{-c_0t} \text{ for  all } t\in [0,\infty).
\end{align*}
Recalling that $\bar C=4c_0$ we obtain \eqref{eq:exp_eta}.\\

\emph{Proof of \eqref{eq:exp_sigma}:} Firstly, \eqref{e:edeq} in Definition \ref{defi:limit_sol} and the exponential decay of $\|\eta(t,\cdot)-\eta_\infty(\cdot)\|_{L^2(\Omega)}$ in \eqref{eq:exp_eta} yield that for all $T\geq 0$
\begin{align*}
\int_T^\infty \int_\Omega |\p_t\eta|^2 \ dsdt &\leq \int_T^\infty \int_\Omega g\cdot \p_t \eta \ dsdt=\int_\Omega g\cdot(\eta_\infty(s)-\eta(T,s)) \ ds\\
&\leq \|\eta(T,\cdot)-\eta_\infty(\cdot)\|_{L^2(\Omega)}\leq (2c_0)^{-1/2}\tilde{E}(0)^{1/2}e^{-c_0T/2}.
\end{align*}
Integrating \eqref{eq:eta_t} from $T$ to $\infty$ and then using the above estimate we obtain
\begin{align} \label{eq:sigma_limit}
\bar C\int_T^\infty \int_\Omega s^{-1}|\sigma\p_s\eta-\sigma_\infty\p_s\eta_\infty|^2 \ dsdt\leq \int_T^\infty \int_\Omega |\p_t\eta|^2 \ dsdt\leq (2c_0)^{-1/2}\tilde{E}(0)^{1/2}e^{-c_0T/2}.
\end{align}
We write the left side of \eqref{eq:sigma_limit} as a sum of integrals over $\Omega_1$, $\Omega_{2,1}$ and $\Omega_{2,2}$ and estimate them separately (cf. \eqref{eq:Omega} for the definition of those sets). In $\Omega_1$ we have $|\p_s\eta|=1$. Thus by the triangle inequality and $\sigma, \sigma_\infty\geq 0$ we have $|\sigma-\sigma_\infty|= \left||\sigma\p_s\eta|-|\sigma_\infty\p_s\eta_\infty\right||\leq |\sigma\p_s\eta-\sigma_\infty\p_s\eta_\infty|$ in $\Omega_1$. In $\Omega_{2,1}$ using $\sigma=0$ and the explicit expression of $(\sigma_\infty,\eta_\infty)$ we have $s^{-1}|\sigma\p_s\eta-\sigma_\infty\p_s\eta_\infty|^2=s^{-1}|\sigma_\infty\p_s\eta_\infty|^2=s=s^{-1}|\sigma-\sigma_\infty|^2$. Finally $|\Omega_{2,2}|=0$ by (ii) of Definition \ref{defi:limit_sol}. These observations together with \eqref{eq:sigma_limit} yield
\begin{align*}
\int_{T}^\infty \int_\Omega s^{-1}|\sigma-\sigma_\infty|^2 \ dsdt\leq (4c_0)^{-1}(2c_0)^{-1/2}\tilde{E}(0)^{1/2}e^{-c_0T/2}.
\end{align*}
This completes the proof for \eqref{eq:exp_sigma}.
\end{proof}

\section{Backward solutions and non-uniqueness}
\label{s:backward}
In this section we present some consequences of our previous results providing existence of solutions backwards in time and branching of (generalized) trajectories of our gradient flow.  \begin{defi}\label{defi:back_sol} Given an initial datum $\eta_0\in W^{1,\infty}(\Omega)^d$ with $\eta_0(1)=0$ and $|\p_s\eta_0(s)|\leq 1$ for almost every $s\in \Omega$, a pair $(\eta,\sigma)$ is generalized solution to \eqref{eq:whip}, \eqref{eq:bdry2} in $Q_{-\infty}:=(-\infty,0)\times \Omega$ if
\begin{itemize}
\item[(i)]  $\eta\in L^\infty_{loc}((-\infty,0]; W^{1,\infty}(\Omega))^d$, $\p_t\eta\in L^2_{loc}((-\infty,0]);L^2(\Omega))^d$, $\sigma\in L^2_{loc}((-\infty,0] ;H^1(\Omega))$ and $\sigma\p_s\eta\in L^2_{loc}((-\infty,0]; H^1(\Omega))^d$.
\item[(ii)] The pair $(\eta, \sigma)$ satisfies
for a.e. $(t,s)\in Q_{-\infty}$
\begin{align} \p_t\eta(t,s) = \p_s(\sigma(t,s) \p_s\eta(t,s)) &+ g,\\
\sigma(t,s) \left(|\p_s\eta(t,s)|^2 -1\right ) &=0,\label{eq:defiib}\\
|\p_s\eta(t,s)|&\leq 1,\label{eq:defii2b}
\end{align}
and the initial/boundary conditions
\begin{align*}
& \eta(t,1)=0\text{ for all } t,\  \eta(0,s)=\eta_0(s)  \text{ for all } s,\ \sigma(t,0)=0  \text{ for a.e. } t.
\end{align*}
\item[(iii)] The solution $\eta$ satisfies the energy dissipation inequality
\begin{equation} \label{e:edeqb}
\int_\Omega |\p_t\eta(t,s)|^2 ds\leq \int_\Omega g\cdot \p_t\eta(t,s) ds
\end{equation} for a.e. $t\in (-\infty,0)$.
\end{itemize}
\end{defi}
As the following proposition shows, the backward solutions can be easily constructed by considering the forward problem for the upward gravity $-g$, and reversing the time afterwards.
\begin{prop}\label{p:back} For every $\eta_0\in W^{1,\infty}(\Omega)^d$ with $\eta_0(1)=0$ and $|\p_s\eta_0(s)|\leq 1$ for a.e. $s\in \Omega$, there exists a generalized solution to \eqref{eq:whip}, \eqref{eq:bdry2} in $Q_{-\infty}$. The solution satisfies $\sigma(t,s)\leq 0$ a.e. in $Q_{-\infty}$. Set $\hat{E}(t):=E(\eta_{-\infty})-E(\eta(t,\cdot))$, where $(\eta_{-\infty}, \sigma_{-\infty})(s):=((s-1)g, -s)$ is the unstable upright stationary solution. Then \begin{align}\label{eq:expback}
0 \leq\hat{E}(t)\leq e^{c_0t} \hat{E}(0),\quad t\in (-\infty,0],
\end{align} where $c_0>0$ is the same as in Theorem \ref{thm:exp}.
\end{prop}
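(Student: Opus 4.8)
The plan is to deduce both the existence statement and the estimate \eqref{eq:expback} from a single time-reversal symmetry of the system, applied to the forward theory already developed. The key observation is that the map
\[
(\eta,\sigma)(t,s)\ \longmapsto\ (\hat\eta,\hat\sigma)(t,s):=\bigl(-\eta(-t,s),\,-\sigma(-t,s)\bigr)
\]
sends a forward generalized solution on $Q_\infty$ to a backward one on $Q_{-\infty}$ while reversing the sign of the multiplier. Indeed $\p_t\hat\eta(t,s)=(\p_t\eta)(-t,s)$ (the sign from $\eta\mapsto-\eta$ cancels the one from $t\mapsto-t$), while $(\hat\sigma\,\p_s\hat\eta)(t,s)=(\sigma\,\p_s\eta)(-t,s)$, so that $\p_s(\hat\sigma\p_s\hat\eta)(t,s)=[\p_s(\sigma\p_s\eta)](-t,s)$; hence $(\eta,\sigma)$ solving $\p_t\eta=\p_s(\sigma\p_s\eta)+g$ on $Q_\infty$ is equivalent to $(\hat\eta,\hat\sigma)$ solving the same equation on $Q_{-\infty}$, and $\sigma\ge0$ becomes $\hat\sigma\le0$.

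For existence I would first note that $-\eta_0$ again satisfies the hypotheses, since $\eta_0(1)=0$ and $|\p_s\eta_0|\le1$ are invariant under $\eta_0\mapsto-\eta_0$. Applying Theorem \ref{thm:limit} to the datum $-\eta_0$ produces a forward generalized solution $(\eta,\sigma)$ on $Q_\infty$ with $\eta(0,\cdot)=-\eta_0$ and $\sigma\ge0$ a.e. Defining $(\hat\eta,\hat\sigma)$ by the transformation above then yields a candidate backward solution with $\hat\eta(0,\cdot)=-\eta(0,\cdot)=\eta_0$ and $\hat\sigma\le0$ a.e.

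It then remains to check the three axioms of Definition \ref{defi:back_sol}. Property (i) is immediate, as the function spaces are invariant under composition with $t\mapsto-t$ and multiplication by $-1$. For (ii) the PDE was verified above; the relaxed constraint $\hat\sigma(|\p_s\hat\eta|^2-1)=0$ and the bound $|\p_s\hat\eta|\le1$ follow from the forward identities together with $|\p_s\hat\eta(t,\cdot)|=|\p_s\eta(-t,\cdot)|$, and the boundary/initial conditions transform as noted. The one point deserving care is the energy--dissipation inequality (iii): because $\p_t\hat\eta(t,\cdot)=(\p_t\eta)(-t,\cdot)$, both $\int_\Omega|\p_t\hat\eta|^2$ and $\int_\Omega g\cdot\p_t\hat\eta$ are exactly the forward quantities evaluated at time $-t$, so (iii) for $(\hat\eta,\hat\sigma)$ at $t<0$ is precisely (iii) for $(\eta,\sigma)$ at $-t>0$.

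Finally, for \eqref{eq:expback} I would exploit the linearity of $E$ and the relation $\eta_{-\infty}=-\eta_\infty$. Linearity gives $E(\hat\eta(t,\cdot))=-E(\eta(-t,\cdot))$ and $E(\eta_{-\infty})=-E(\eta_\infty)$, whence
\[
\hat E(t)=E(\eta_{-\infty})-E(\hat\eta(t,\cdot))=E(\eta(-t,\cdot))-E(\eta_\infty)=\tilde E(-t),
\]
where $\tilde E$ is the forward relative energy of $(\eta,\sigma)$. Since $\sigma\ge0$, Theorem \ref{thm:exp} gives $\tilde E(\tau)\le e^{-c_0\tau}\tilde E(0)$ for $\tau\ge0$; setting $\tau=-t\ge0$ yields $\hat E(t)\le e^{c_0t}\tilde E(0)=e^{c_0t}\hat E(0)$, while Remark \ref{rmk:nonnegative} gives $\hat E(t)=\tilde E(-t)\ge0$. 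I expect no serious obstacle: the argument is a symmetry reduction, and the only thing to be vigilant about is sign bookkeeping, so that the dissipation inequality and the sign of $\sigma$ are preserved in the correct direction.
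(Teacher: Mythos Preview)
Your proof is correct and uses essentially the same time-reversal symmetry as the paper. The only cosmetic difference is that the paper reverses the gravity vector (solving \eqref{eq:whip} with $-g$ and initial datum $\eta_0$, then setting $(\eta,\sigma)(t,\cdot)=(\eta^-,-\sigma^-)(-t,\cdot)$), whereas you reverse the sign of $\eta$ (solving with $g$ and initial datum $-\eta_0$, then setting $(\hat\eta,\hat\sigma)(t,\cdot)=(-\eta,-\sigma)(-t,\cdot)$); since $\eta\mapsto-\eta$ intertwines the two problems, the resulting backward solutions coincide and the energy computations match up identically.
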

\begin{proof} By Theorem \ref{thm:limit}, there exists a generalized solution $(\eta^-,\sigma^-)$, $\sigma^-\geq 0$ a.e. in $Q_\infty$, to the problem \eqref{eq:whip}, \eqref{eq:bdry2} with $g$ replaced by $-g$. The corresponding potential energy  is $$E^-(\eta^-):=\int_0^1 g\cdot \eta^-\,ds.$$ Noting that $(\eta_{-\infty}, \sigma_{\infty})=((s-1)g, s)$ is the stable stationary solution the problem \eqref{eq:whip}, \eqref{eq:bdry2} with the reversed gravity $-g$, we introduce the relative energy $\tilde{E}^-(t):=E^-(\eta^-(t, \cdot))-E^-(\eta_{-\infty})$. By Theorem \ref{thm:exp} and Remark \ref{rmk:nonnegative}, this relative energy  satisfies \begin{align}\label{eq:expbackf}
0 \leq\tilde{E}^-(t)\leq e^{-c_0t} \tilde{E}^-(0),\quad t \geq 0.
\end{align} Define the pair of functions $(\eta,\sigma):Q_{-\infty}\to \R^d\times \R$ by $$(\eta,\sigma)(t,\cdot):=(\eta^-,-\sigma^-)(-t,\cdot).$$ This pair solves \eqref{eq:whip}, \eqref{eq:bdry2} in $Q_{-\infty}$, and $\sigma\leq 0$ a.e. in $Q_{-\infty}$. Observe that the dissipation inequality \eqref{e:edeqb} holds since \begin{equation*} \int_\Omega |\p_t\eta(t,s)|^2 ds=
\int_\Omega |\p_t\eta^-(-t,s)|^2 ds\leq \int_\Omega (-g)\cdot \p_t\eta^-(-t,s) ds=\int_\Omega g\cdot \p_t\eta(t,s) ds
\end{equation*} for a.e. $t< 0$. Finally, $$\hat E(t)=E(\eta_{-\infty})-E(\eta(t,\cdot))=E^-(\eta^-(-t,\cdot))-E^-(\eta_{-\infty})=\tilde{E}^-(-t),$$ so the exponential decay \eqref{eq:expback} follows from \eqref{eq:expbackf}. \end{proof} The next statement shows that the generalized solutions can be non-unique. The idea is to follow the backward trajectory for a while, and then start going forward.  Then we have two options: either to return by the same trajectory (with non-positive $\sigma$) or to choose the trajectory provided by  Theorem \ref{thm:limit} (with non-negative $\sigma$).

\begin{prop}\label{p:branch} Let $(\bar\eta, \bar\sigma)$ be any backward generalized solution provided by Proposition \ref{p:back}. Then there exists a null set $O\subset (0,\infty)$ so that for every initial datum of the form $\eta_0:=\bar\eta(-T)$, $T>0$, $T\not\in O$, there are at least two different generalized solutions on $Q_T$ emanating from $\eta_0$. \end{prop}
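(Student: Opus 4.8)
The plan is to exhibit, for each admissible $T$, two generalized solutions on $Q_T$ issuing from $\eta_0 := \bar\eta(-T,\cdot)$ whose Lagrange multipliers have opposite sign. First I would isolate the exceptional set. By Definition \ref{defi:back_sol}, i.e. conditions (i) and (ii) of Definition \ref{defi:limit_sol} transported to $Q_{-\infty}$, the backward solution satisfies $\bar\eta \in L^\infty_{loc}((-\infty,0];W^{1,\infty}(\Omega))^d$, $|\p_s\bar\eta| \le 1$ a.e., and $\bar\eta(t,1) = 0$ for a.e. $t$. Fubini's theorem then produces a null set $O \subset (0,\infty)$ such that for every $T \notin O$ the slice $\eta_0 = \bar\eta(-T,\cdot)$ lies in $W^{1,\infty}(\Omega)^d$ and satisfies $\eta_0(1) = 0$ and $|\p_s\eta_0| \le 1$ a.e.; in other words $\eta_0$ is an admissible initial datum in the sense of Theorem \ref{thm:limit}.

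Fix $T \notin O$. The first solution is the forward translate $\eta_1(t,\cdot) := \bar\eta(t-T,\cdot)$ with $\sigma_1(t,\cdot) := \bar\sigma(t-T,\cdot)$ for $t \in (0,T)$; it is a generalized solution emanating from $\eta_0$ (conditions (i)--(iii) are inherited from $\bar\eta$ by the mere time-shift) and it carries $\sigma_1 \le 0$ a.e. by Proposition \ref{p:back}. The second solution $(\eta_2,\sigma_2)$, with $\sigma_2 \ge 0$ a.e., is any one furnished by Theorem \ref{thm:limit} for the datum $\eta_0$. The task is to show that these two pairs are distinct.

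I would argue by contradiction. If $(\eta_1,\sigma_1) = (\eta_2,\sigma_2)$, then $\sigma_1 = \sigma_2$ a.e., and combined with $\sigma_1 \le 0 \le \sigma_2$ this common multiplier vanishes a.e. on $Q_T$. The shared equation $\p_t\eta = \p_s(\sigma\p_s\eta) + g$ then collapses to $\p_t\eta_1 = g$; equivalently $\bar\sigma \equiv 0$ and $\p_t\bar\eta = g$ a.e. on $(-T,0)\times\Omega$.

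The crux --- and the step I expect to be the main obstacle --- is to rule this degenerate alternative out, and the fixed end $\bar\eta(t,1) = 0$ is precisely what forbids it. From $\p_t\bar\eta = g$ we get $\bar\eta \in H^1((-T,0);L^2(\Omega))^d$ with constant time-derivative $g$, so by the embedding \eqref{e:emb} the identity $\bar\eta(\tau,\cdot) = \bar\eta(-T,\cdot) + (\tau+T)g$ holds in $L^2(\Omega)$ for every $\tau \in [-T,0]$; since the slices admit Lipschitz-in-$s$ representatives, it passes to those representatives and may be evaluated at $s = 1$. Using $\bar\eta(\tau,1) = 0$ for a.e. $\tau$, together with $\bar\eta(-T,1) = \eta_0(1) = 0$, this forces $(\tau+T)g = 0$ for a.e. $\tau \in (-T,0)$, hence $g = 0$, contradicting $|g| = 1$. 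Therefore $(\eta_1,\sigma_1) \neq (\eta_2,\sigma_2)$, giving two distinct generalized solutions from $\eta_0$ for every $T \notin O$. The only points needing genuine care are the measure-theoretic passage from the a.e. boundary condition to the pointwise trace identity at $s=1$ (supplied by the Lipschitz regularity in $s$ and \eqref{e:emb}) and the routine verification that the time-shift $\eta_1$ preserves the energy-dissipation inequality (iii), which is immediate.
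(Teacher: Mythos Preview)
Your proposal is correct and follows essentially the same route as the paper: construct the time-shifted backward solution with $\sigma\le 0$ and the forward solution from Theorem~\ref{thm:limit} with $\sigma\ge 0$, then derive a contradiction from $\sigma\equiv 0$ via $\p_t\eta=g$ and the boundary condition $\eta(t,1)=0$. You are somewhat more explicit than the paper about the measure-theoretic justification for evaluating the resulting identity at $s=1$, but the argument is the same.
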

\begin{proof} Let us first check that Theorem \ref{thm:limit}  is applicable to $\eta_0$ defined in the statement of the proposition. Since $|\p_s\bar\eta(t,s)|^2\leq 1$ a.e. in $Q_{-\infty}$, we have $|\p_s\eta_0(s)|^2=|\p_s\bar\eta(-T,s)|^2\leq 1$ a.e. in $\Omega$ for a.e. $T>0$. Moreover, without loss of generality, $\eta_0(1)=0$. By Theorem \ref{thm:limit} there exists a generalized solution $(\eta^+,\sigma^+)$ on $Q_{\infty}$ emerging from $\eta_0$, and $\sigma^+\geq 0$ almost everywhere. On the other hand, consider the pair $(\eta^-,\sigma^-):Q_T\to \R^d\times \R$, $(\eta^-,\sigma^-)(t,\cdot):=(\bar\eta,\bar\sigma)(t-T,\cdot)$. Then $(\eta^-,\sigma^-)$ is a generalized solution to \eqref{eq:whip}, \eqref{eq:bdry2} on $Q_{T}$ originating from $\eta_0$, and $\sigma^-\leq 0$ a.e. in $Q_T$. If it were that $(\eta^+,\sigma^+)=(\eta^-,\sigma^-)$ a.e. in $Q_T$, then $\sigma_+$ would vanish a.e. in $Q_T$, whence $\eta_+=\eta_0+gt$, which would violate the first boundary condition in \eqref{eq:bdry2}. \end{proof}

\begin{rmk} It is clear that a similar non-uniqueness property holds for the backward solutions.  \end{rmk}

\begin{rmk} \label{udwhip} Proposition \ref{p:branch} applies to the case $(\bar\eta, \bar\sigma)(t,s)=(\eta_{-\infty}, \sigma_{-\infty})(s)=((s-1)g, -s)$, showing that there is a non-stationary solution which originates from the stationary upright solution $\eta_{-\infty}$. In \cite[Section 5.2]{JDE17}, a very similar phenomenon was observed for the Young measure solutions to \eqref{eq:geodgrav}, \eqref{e:bkg}.   Actually, we can provide two explicit non-stationary generalized  solutions emanating from the upright string $\eta_{-\infty}$. The first one is \begin{equation*}
\begin{split}
\eta_1=\begin{cases}
(-1+t+s)g,& s\leq 1-\frac t 2\\
(1-s)g,& s> 1-\frac t 2
\end{cases}
\end{split}
\end{equation*} with
\begin{equation*}
\begin{split}
\sigma_1=\begin{cases}
0,& s\leq 1-\frac t 2\\
s+\frac t 2 -1,& s> 1-\frac t 2.
\end{cases}
\end{split}
\end{equation*} The second one is \begin{equation*}
\begin{split}
\eta_2=\begin{cases}
(-1+t-s)g,& s< \frac t 2\\
(s-1)g,& s\geq \frac t 2
\end{cases}
\end{split}
\end{equation*} with
\begin{equation*}
\begin{split}
\sigma_2=\begin{cases}
0,& s< \frac t 2\\
-s+\frac t 2,& s\geq \frac t 2.
\end{cases}
\end{split}
\end{equation*} Here we implicitly assumed that $t\leq 2$ because afterwards these solutions reach the stable equilibrium. It is easy to check that both pairs $(\eta_i,\sigma_i)$, $i=1,2$, satisfy Definition \ref{defi:limit_sol}. In both cases $\p_s \eta_i$ have discontinuities. We also observe that $\sigma_1$ is non-negative and continuous, but does not coincide with $\sigma_{-\infty}$ at $t=0$. Moreover, the orthogonal projection $P_{\eta_1(0)}g=0$, but for positive times it formally jumps to \begin{equation*}
\begin{split}
P_{\eta_1(t)}g=\p_t\eta_1(t)=\begin{cases}
g,& s< 1-\frac t 2\\
0,& s> 1-\frac t 2.
\end{cases}
\end{split}
\end{equation*} On the other hand, $\sigma_2$ is non-positive, continuous, and agrees with $\sigma_{-\infty}$ at $t=0$. Furthermore, the orthogonal projection \begin{equation*}
\begin{split}
P_{\eta_2(t)}g=\p_t\eta_2(t)=\begin{cases}
g,& s< \frac t 2\\
0,& s> \frac t 2
\end{cases}
\end{split}
\end{equation*} is continuous with respect to time in the $L^2$-topology. There are large zones where $\sigma_i\equiv 0$, but the strong constraint $|\p_s\eta_i|=1$ still holds almost everywhere. \end{rmk}

\subsection*{Acknowledgment} The idea of this paper originated from conversations of the second author with Yann Brenier during a stay at ESI in Vienna. He would like to thank
 Yann Brenier for the inspiring discussions, Ulisse Stefanelli for the invitation to the thematic program "Nonlinear Flows" at ESI, and ESI for hospitality. The research was supported  by CMUC (UID/MAT/00324/2019), funded by the Portuguese government through FCT and co-funded by the ERDF through PT2020, and by FCT through projects TUBITAK/0005/2014 and PTDC/MAT-PUR/28686/2017. 

\def\cprime{$'$}

\end{document}